\def\wt#1{\widetilde{#1}}
\def\wh#1{\widehat{#1}}
\def\m#1{\mathcal{#1}}
\renewcommand{\leq}{\leqslant}
\renewcommand{\geq}{\geqslant}
\def\bb#1{\mathbb{#1}} 
\newcommand{\Z}{\mathbb{Z}}
\newcommand{\R}{\mathbb{R}}
\newcommand{\F}{\mathbb{F}}
\newcommand{\Q}{\mathbb{Q}}
\newcommand{\CP}{\mathbb{C}\mathrm{P}}
\def\Symp{\mathrm{Symp}}
\def\id{\mathrm{id}}
\def\Ham{\mathrm{Ham}}
\def\Hamtilde{\widetilde{\mathrm{Ham}}}
\def\Hambar{\overline{\mathrm{Ham}}}
\def\Hamhat{\widehat{\mathrm{Ham}}}
\newcommand{\QH}{\mathrm{QH}}
\newcommand{\HH}{\mathrm{H}}
\newcommand{\CF}{\mathrm{CF}}
\newcommand{\HF}{\mathrm{HF}}
\def\FP{\star}
\def\PSS{\mathrm{PSS}}
\def\wrt{\!:\!}
\newcommand{\eps}{\varepsilon}
\newcommand{\diagram}{\xymatrix} %If no use of Babel
\begin{document}

\title{Essential loops {in completions of Hamiltonian groups}}
\date{\today}
\author{Vincent Humili\`ere, Alexandre Jannaud, R\'emi Leclercq}

\maketitle
    
\begin{abstract} { 
  We initiate the study of the fundamental group of natural completions of the group of Hamiltonian diffeomorphisms, namely its $C^0$-closure $\Hambar(M,\omega)$ and its completion with respect to the spectral norm $\Hamhat(M,\omega)$. We prove that in some situations, namely complex projective spaces and rational Hirzebruch surfaces, certain Hamiltonian loops that were known to be non-trivial in $\pi_1\big(\Ham(M,\omega)\big)$ remain non-trivial in $\pi_1\big(\Hamhat(M,\omega)\big)$. This yields in particular cases, including $\CP^2$ and the monotone $S^2\times S^2$, the injectivity of the map $\pi_1\big(\Ham(M,\omega)\big)\to\pi_1\big(\Hamhat(M,\omega)\big)$ induced by the inclusion. The same results hold for the Hofer completion of $\Ham(M,\omega)$. Moreover, whenever the spectral norm is known to be $C^0$-continuous, they also hold for $\Hambar(M,\omega)$.

  Our method relies on computations of the valuation of Seidel elements and hence of the spectral norm on $\pi_1\big(\Ham(M,\omega)\big)$. Some of these computations were known before, but we also present new ones which might be of independent interest.
  For example, we show that the spectral pseudo-norm is degenerate when $(M,\omega)$ is any non-monotone $S^{2}\times S^{2}$. At the contrary, it is a genuine norm when $M$ is the 1-point blow-up of $\CP^{2}$; it is unbounded for small sizes of the blow-up and become bounded starting at the monotone one.} 
\end{abstract}

\section{Introduction}\label{sec:intro}

Let $(M,\omega)$ be a closed symplectic manifold. We denote by $\Ham(M,\omega)$ its group of Hamiltonian diffeomorphisms. This group admits various completions that have been studied in the litterature. In particular, we may first consider its closure $\Hambar(M,\omega)$ with respect to the $C^0$-topology in the set of all homeomorphisms of $M$.  The elements of $\Hambar(M,\omega)$ are called \emph{Hamiltonian homeomorphisms}. Their behavior is quite well understood on surfaces, in particular thanks to Le Calvez's foliation techniques established in \cite{MR2217051}, see e.g. Le Calvez's work on the subject starting from \cite{MR2275671,MR2219272}.
 In higher dimension, partial results were obtained very recently by Buhovsky, Seyfaddini, and the first author, see e.g. \cite{MR3827210,MR4263685}.  We may also consider the completion of $\Ham(M,\omega)$ with respect to the so-called \emph{spectral norm}, which we will denote by $\Hamhat(M,\omega)$. Its study was initiated in \cite{Humiliere-completion}, and was more recently further developped in \cite{viterbo2022supports}. It has applications to Hamilton--Jacobi equations \cite{Humiliere-completion}, Symplectic Homogenization theory \cite{Viterbo-Homogenization}, and to conformally symplectic dynamics \cite{AHV}.

In this note, we are interested in the natural maps
\[\iota:\Ham(M,\omega)\to \Hambar(M,\omega),\quad j:\Ham(M,\omega)\to \Hamhat (M,\omega) \]
starting at the level of fundamental groups, on some symplectic manifolds of arbitrary dimension.
Indeed, while the homotopy type of $\Ham(M,\omega)$ has been \emph{extensively} studied, see below for some references which are used here, absolutely nothing is known about $\Hamhat(M,\omega)$ nor $\Hambar(M,\omega)$ beyond the case of surfaces where the map $\iota$ is known to be a homotopy equivalence. 
Before getting to the heart of the matter, let us point out that the analogous map $\Symp(M,\omega)\to \overline\Symp(M,\omega)$, between the groups of symplectic diffeomorphisms and homeomorphisms, was studied at the $\pi_0$ level by the second author \cite{Jannaud1, Jannaud2}.

\subsection{Main results}

The upshot of this work is a method which detects non-trivial elements in the image of $j_{*}$, i.e. non-trivial elements in $\pi_{1}\big(\Ham(M,\omega)\big)$ which survive in the fundamental group after taking its completion with respect to the spectral norm $\gamma$. 
When $\gamma$ is known to be $C^0$ continuous, this also yields essential loops of Hamiltonian homeomorphims.

For our method to work, all symplectic manifolds will be required to be \emph{rational}, meaning that their group of periods $\langle \omega, \pi_{2}(M) \rangle$ is generated by a unique positive element, which will be denoted by $\Omega$. In other words, $\langle \omega, \pi_{2}(M) \rangle = \Omega\bb Z$.

\medskip
The first application of our method concerns Hamiltonian circle actions and relies on deep work by McDuff and Tolman \cite{MR2210662}.  Recall that a fixed point component of a circle action is \emph{semifree} if it admits a neighborhood in which the stabilizer of every point is either trivial or the whole circle.

\begin{theo}\label{theo:general-circle-action}
  Consider a non-trivial Hamiltonian circle action $\Lambda$ on a compact, rational symplectic manifold $(M, \omega)$. Assume that its extremal fixed point components are semifree. Then $j_{*}([\Lambda])$ is non-trivial in  $\pi_{1}\big(\Hamhat(M,\omega)\big)$.
\end{theo}

As a direct consequence, we deduce the injectivity of $j_{*}$ in two different specific situations.
\begin{corol}\label{corol:S2S2-CP2}
    The map $j_{*} : \pi_{1}\big( \Ham(M,\omega) \big) \to \pi_{1}\big( \Hamhat(M,\omega) \big)$ is injective when $(M,\omega)$ is the monotone product $S^{2}\times S^{2}$ and for $\CP^{2}$ endowed with the Fubini--Study symplectic form.
\end{corol}

This consequence is straightforward since, in these two cases, all non-trivial elements of $\pi_1\big(\Ham(M,\omega)\big)$ may be represented by a Hamiltonian circle action satisfying the conditions of Theorem \ref{theo:general-circle-action}.
Whenever $\pi_1\big(\Ham(M,\omega)\big)$ has an element which does not admit such a representative, our method needs some extra work to produce an essential loop in the completion.
Notice in particular that semifreeness is not preserved under taking non-trivial powers. Therefore, we cannot directly extract information from Theorem \ref{theo:general-circle-action} about elements of the form $h^k$ with $|k|\neq 1$, even when $h$ can be represented by a circle action with semifree extremal components.

\begin{example}\label{example:non-monotone-S2xS2}
The fundamental group of the Hamiltonian diffeomorphism group of $S^{2}\times S^{2}$ endowed with a non-monotone product symplectic form is generated by two order-2 elements and the class of a loop $\Lambda$ of infinite order. All three are ensured to survive in {$\pi_{1}\big( \Hamhat(M,\omega) \big)$} by Theorem \ref{theo:general-circle-action}, however our method does not detect the even powers of $\Lambda$ so that, as far as we know, $j_{*}([\Lambda])$ might as well be of order 2 (see computations in Section \ref{sec:computations-Hirz-Surfaces}).
\end{example}

\medskip
However, we also have results in these more interesting situations, namely for complex projective spaces of any dimension and for all rational 1-point blow-ups of $\CP^{2}$.
 First, recall that Seidel \cite{MR1487754} proved that the group $\pi_1\big(\Ham(\CP^n,\omega_\mathrm{FS})\big)$ admits a non-trivial element $h_{n}$ of order $n+1$.
\begin{corol}\label{theo:CPn}
  For any $n\geq 1$, the element $j_*(h_{n})$ has order $n+1$ in  the group $\pi_1\big(\Hamhat(\CP^n,\omega_\mathrm{FS})\big)$.
\end{corol}

Second, recall that the symplectic $1$-point blow-ups $\bb F^{\mu} = (\CP^{2}\#\overline{\CP}^{2}, \omega'_{\mu})$ of $\CP^{2}$ admit a standard symplectic form $\omega'_{\mu}$ parameterized by a positive real number $\mu$. The fundamental group of their respective Hamiltonian diffeomorphism groups was computed by Abreu and McDuff \cite{MR1775741}: it is generated by a unique Hamiltonian circle action of infinite order.
\begin{theo}%[1-point Blow-ups of $\CP^{2}$]
  \label{theo:Hirzebruch-surfaces}
  The map $j_{*}$ is injective on all rational $1$-point blow-ups of $\CP^{2}$. In other words, whenever $\mu \in \bb Q$, the map $j_{*} : \pi_{1}\big( \Ham(\bb F^{\mu}) \big) \to \pi_{1}\big( \Hamhat(\bb F^{\mu}) \big)$ is injective.
\end{theo}

\begin{remark}\label{rem:completions} The above results also hold if one replaces the spectral distance $\gamma$ with Hofer's distance (which we denote here by $\delta$). Indeed, the Hofer continuity of spectral invariants yields $\gamma \leq \delta$ and thus a natural map $\Hamhat^{\delta}\!(M,\omega)\to\Hamhat^{\gamma}\!(M,\omega)$ between the respective completions, and hence a factorization of $j_*$
\[\pi_1\big(\Ham(M,\omega)\big)\to\pi_1\big(\Hamhat^{\delta}\!(M,\omega)\big)\to\pi_1\big(\Hamhat^{\gamma}\!(M,\omega)\big) \,.\]
Therefore, any class in $\pi_1\big(\Ham(M,\omega)\big)$  which is non-trivial in $\pi_1\big(\Hamhat^{\gamma}\!(M,\omega)\big)$ is also  non-trivial in $\pi_1\big(\Hamhat^{\delta}\!(M,\omega)\big)$.
\end{remark}

\subsection{Essential loops of Hamiltonian homeomorphisms in $\CP^n$.}

Remark that when the spectral norm is $C^{0}$ continuous, $j_{*}$ factorizes as
\begin{equation}\label{eq:inclusion2Hamhat-factor-through-Hambar}
    \begin{tikzcd}
    \pi_1\big(\Ham(M,\omega)\big) \ar[r, "\iota_{*}"] & \pi_1\big(\Hambar(M,\omega)\big)  \ar[r] & \pi_1\big(\Hamhat(M,\omega)\big) 
  \end{tikzcd}
\end{equation}
which ensures that elements  with non-trivial image via $j_*$ also have non-trivial image via $i_{*}$.

This is in particular the case in the complex projective space $\CP^n$, by a result of Shelukhin \cite{MR4480149}. Thus, from Corollaries \ref{corol:S2S2-CP2} and \ref{theo:CPn}, we deduce the following.

\begin{corol}\label{theo:CPn-homeo}
  For any $n\geq 1$, the element $\iota_*(h_{n})$ has order $n+1$ in  the group $\pi_1\big(\Hambar(\CP^n,\omega_\mathrm{FS})\big)$.
  {For $n=2$, the map $\iota_*:\pi_1\big(\Ham(\CP^2,\omega_\mathrm{FS})\big)\to\pi_1\big(\Hambar(\CP^2,\omega_\mathrm{FS})\big)$ is injective.}
\end{corol}

Note that the above corollary can also be obtained by non-symplectic methods, as was pointed out to us by Randal-Williams. Indeed, Sasao shows in \cite{MR0346783} that the action of $U(n+1)$ on $\CP^{n}$ induces an isomorphism between $\Z/(n+1)\Z$ and the fundamental group of the group of degree-1 continuous maps from $\CP^{n}$ to itself. This immediately implies that the class $h_n$, which is induced by the aforementioned action, is non-trivial in $\Ham(\CP^{n},\omega_{\mathrm{FS}})$ and in its closure $\Hambar(\CP^{n},\omega_{\mathrm{FS}})$. 

\begin{remark} Conjecturally, the spectral norm $\gamma$ is $C^0$ continuous for a large class of symplectic manifolds. If this holds in the settings of Theorems \ref{theo:general-circle-action} and \ref{theo:Hirzebruch-surfaces}, both statements admit analogs for the group of Hamiltonian homeomorphisms. 
  This follows from the above argument. 
\end{remark}

\subsection{The method}
\label{sec:method-1}

Our method is based on the following proposition of independent interest. In order to state it, we need to briefly recall a few well-established notions (necessary preliminaries are given in Section \ref{sec:prelim}). The quantum homology of $(M,\omega)$ is denoted by $QH(M, \omega)$. We let $\nu : QH(M,\omega)\to \Omega\Z\cup\{-\infty\}$ be the quantum valuation map, and $\mathcal S:\pi_1(\Ham(M,\omega))\to QH(M,\omega)^\times$ be the Seidel morphism. We consider the map $\Gamma : \pi_{1}\big(\Ham(M,\omega)\big) \to  \Omega\Z$ defined by $\Gamma(h)=\nu\big(\m S(h)\big)+\nu\big(\m S(h^{-1})\big)$.

\begin{prop}\label{prop:extension-Seidel} Assume that $(M,\omega)$ is a rational symplectic manifold, 
    then there exists a map $\wh\Gamma : \pi_1\big(\Hamhat(M,\omega)\big)\to \R$ so that the following diagram commutes:
\begin{equation}\label{eq:diagram-gamma-gammahat}
    \begin{tikzcd}
    \pi_1\big(\Ham(M,\omega)\big) \ar[d, "\Gamma"'] \ar[r, "j_{*}"] & \pi_1\big(\Hamhat(M,\omega)\big) \ar[ld, "\wh\Gamma"] \\
    \R  & 
  \end{tikzcd}
\end{equation}
\end{prop}

Based on this proposition, the proof of Theorem \ref{theo:general-circle-action}, Corollary \ref{theo:CPn}, and Theorem \ref{theo:Hirzebruch-surfaces} boils down to computing $\Gamma$ in order to show that it is non-zero on elements of $\pi_{1}\big( \Ham(M,\omega) \big)$. The diagram above then ensures that their image via $j_{*}$ in $\pi_1\big(\Hamhat(M,\omega)\big)$ cannot be trivial.

\begin{remark}
  McDuff already used the map $\Gamma$ to estimate the length of loops of Hamiltonian diffeomorphisms of 1-point blow-ups of $\CP^{2}$, see \cite[Lemma 5.1]{MR1959582}.
\end{remark}

\medskip
{An important ingredient in the} proof of Proposition \ref{prop:extension-Seidel} is the action of the Seidel homomorphism on spectral invariants. Indeed, it is not hard to see that, when $(M,\omega)$ is rational,  $\Gamma$ coincides with the restriction of the spectral pseudo-norm $\wt\gamma : \wt\Ham(M,\omega) \to \bb R$ to $\pi_{1}\big( \Ham(M,\omega) \big)$, see Section \ref{sec:seidel-morphism-1} for details.
 Proposition \ref{prop:extension-Seidel} is proved in Section \ref{sec:covering}.

\subsection{Computation of the spectral pseudo-norm}

As mentioned above, the proof of our main results boils down to computing the valuation of the Seidel elements associated to the elements of the fundamental group of Hamiltonian diffeomorphism groups.
These computations are based on work by Entov and Polterovich \cite{MR1979584} in the case of complex projective spaces, and on works by McDuff \cite{MR1959582}, by Ostrover \cite{ostrover}, and by Anjos and the third author \cite{MR3798327,MR3681107} in the case of the Hirzebruch surfaces ($S^{2} \times S^{2}$ and the 1-point blow-ups of $\CP^{2}$).

\medskip
Since the resulting function $\Gamma$ coincides with the restriction of the spectral pseudo-norm $\wt\gamma$ to $\pi_1\big(\Ham(M,\omega)\big)$ when the manifold is rational, we get explicit computations of the latter.
We collect below some phenomena of independent interest concerning $\wt\gamma$.
\begin{prop}\label{prop:facts-on-spectral-norm}
  Let $\wt\gamma : \pi_{1}\big(\Ham(M,\omega)\big) \to \bb R$ denote the restriction of the spectral pseudo-norm.
  \begin{enumerate}[label=\emph{(F\arabic*)}]
  \item Let $(M,\omega) = (S^{2}\times S^{2}, \omega_{\mu})$ with $\omega_{\mu}$ the product symplectic form with area $\mu$ on the first factor and $1$ on the second.

    When $\mu \in \bb Q$ and $\mu \neq 1$, $\wt\gamma$ is \emph{degenerate}: $\wt\gamma^{-1}(\{0\})    = \{ h^{2p} \,|\, p \in \bb Z \}$ where $h$ is the generator of infinite order of $\pi_{1}\big(\Ham(M,\omega)\big)$. 
  \item Let $(M,\omega)$ be any symplectic 1-point blow-up $(\CP^{2}\# \overline\CP^{2}, \omega'_{\mu})$ of $\CP^{2}$ or the $n$-dimensional complex projective space $(\CP^{n},\omega_{\mathrm{FS}})$, endowed with the standard Fubini--Study symplectic form.

    When $\mu \in \bb Q$, $\wt\gamma$ is \emph{non-degenerate}.
  \end{enumerate}
We now focus on 1-point blow-ups $(\CP^{2}\# \overline\CP^{2}, \omega'_{\mu})$ of $\CP^{2}$. The symplectic $\omega'_{\mu}$ has area $\mu>0$ on the exceptional divisor, and $1$ on the fiber\footnote{Recall that $\CP^{2}\# \overline\CP^{2}$ is the total space of the only non-trivial Hamiltonian fibration over $\CP^{1}$ with fiber $\CP^{1}$.}. Under these conventions, the only monotone such symplectic manifold is the one given by $\mu = \frac 12$.
\begin{enumerate}[label=\emph{(F\arabic*)}, resume]
\item The spectral norm $\wt\gamma$ is \emph{not bounded} on ``small'' rational 1-point blow-ups of $\CP^{2}$, for which $\mu < \frac 12$ and $\mu \in \bb Q$.
\item On the monotone 1-point blow-up, that is when $\mu = \frac 12$, we have $\wt\gamma(k) = 2$ for all non-trivial elements $k$ of $\pi_{1}\big(\Ham(\CP^{2}\# \overline\CP^{2}, \omega'_{\mathrm{mon}}) \big)$.
\item The image of the spectral norm is \emph{bounded} on ``big'' rational 1-point blow-ups of $\CP^{2}$, for which $\mu \geq \frac 12$ and $\mu \in \bb Q$.
\item  Let $h'$ be the generator of $\pi_{1}\big(\Ham(\CP^{2}\# \overline\CP^{2}, \omega'_{\mu}) \big)$ and $p\in\bb Z$, then the function $\mu \mapsto \Gamma(h'^{p})$, whose restriction to $\bb Q$ is $\mu \mapsto \wt\gamma(h'^{p})$, is continuous and piecewise linear on $\bb R$.
\end{enumerate}
\end{prop}

\begin{remark} Concerning (F1) above, it was proved in \cite{MR3681107} that Seidel's morphism is injective for \emph{all} Hirzebruch surfaces. The computations of Section \ref{sec:computations-Hirz-Surfaces} show that $\nu(\m S)$ is also injective. However, $\nu(\m S)$ does not factor through {$\pi_{1}\big(\Hamhat(M,\omega)\big)$} a priori and $\Gamma$ is \emph{not} injective. 

The fact (F2) for 1-point blow-ups of $\CP^{2}$ was proved in \cite{MR1959582}, by showing that $\Gamma$ is positive. In the second part of Section \ref{sec:computations-Hirz-Surfaces}, we compute the specific values of $\Gamma$ on the fundamental group of the Hamiltonian diffeomorphism group. These computations yield the facts (F2) to (F6).
\end{remark}

\paragraph{Acknowledgments.}

We thank Oscar Randal-Williams for useful email conversation and Sobhan Seyfaddini for pointing out an important mistake in a previous version of the paper. We also thank the anonymous referee for helpful comments.
The first and third authors are partially supported by the ANR grant 21-CE40-0002 (CoSy). The first author is also partially supported by the Institut Universitaire de France, and the ANR grant ANR-CE40-0014. The second author is supported by the Deutsche Forschungsgemeinschaft (DFG, German Research Foundation) under Germany’s Excellence Strategy EXC 2181/1 - 390900948 (the Heidelberg STRUCTURES Excellence Cluster). Some progress on this topic were made during a stay of the second author at the Laboratoire de Mathématiques d'Orsay which we thank for making that stay possible.

\section{Preliminaries}\label{sec:prelim}

This section is a quick recollection of the required constructions: quantum homology, Floer homology, the Seidel morphism, and spectral invariants.
We restrict ourselves to the case of strongly semi-positive symplectic manifolds, which includes all of our examples.

We would like to emphasize the fact that, even though all these constructions depend on the choice of an almost complex structure $J$, the isomorphism class of the resulting quantum and Floer homologies do not.
Ultimately, the spectral distance $\gamma$ (and hence the map $\Gamma$, based on the Seidel morphism), which is our main tool, do not depend on the choice of $J$.
In view of this, and to keep this presentation easily readable, we mostly removed almost complex stuctures from the discussion below.

\subsection{Quantum homology}
\label{sec:quantum-homology}

Let $\bb k$ be a field (which will be chosen to be $\Q$ in general, except for the case of $\CP^{n}$ in Section \ref{sec:proof-theor-CPn} for which $\bb k = \bb C$). The (small) \emph{quantum homology} of a strongly semi-positive symplectic manifold $(M,\omega)$ is the $\Z$-graded algebra defined as $\QH_{*}(M;\Lambda) = \HH_{*}(M;\bb k) \otimes_{\bb k} \Lambda$ where $\Lambda = \Lambda^{\mathrm{univ}}[q,q^{-1}]$ has coefficients in the ring of generalized Laurent series in the degree-0 variable $t$:
  \begin{equation*}
    \Lambda^{\mathrm{univ}} = \Big\{ \sum_{\kappa \in \R} r_{\!\kappa} t^{\kappa} \,\Big|\, r_{\!\kappa} \in \bb k \,\mbox{ s.t. } \forall c \in \R, \; \#\{\kappa > c \,|\, r_{\kappa} \neq 0 \} < \infty \Big\} 
  \end{equation*}
  and $q$ is a variable of degree 2.
  The grading of an element of the form $a\otimes q^{d}t^{\kappa}$ with $a \in \HH_{l}(M;\bb k)$ is simply given by $\deg(a\otimes q^{d}t^{\kappa}) = l + 2d$.

  The \emph{quantum intersection product} on $\QH_{*}(M;\Lambda)$ is a {deformation} of the usual intersection product on $\HH_{*}(M;\bb k)$ by counts of certain Gromov--Witten invariants.
  More precisely, for $a \in \HH_{k}(M;\bb k)$ and $b \in \HH_{l}(M;\bb k)$, 
  \begin{equation*}
    a * b = \sum_{B \in H_{2}^{S}(M;\Z)} (a * b)_{B} \otimes q^{-c_{1}(B)} t^{-\omega(B)}
  \end{equation*}
  where the sum runs over all spherical homology classes $B$, i.e. classes $B$ in the image $H_{2}^{S}(M;\Z)$ of the Hurewicz map $\pi_{2}(M) \to H_{2}(M;\Z)$.

 The class $(a * b)_{B} \in \HH_{*}(M;\bb k)$ has degree $k+l-\dim(M)+2c_{1}(B)$ and is defined by requiring its usual intersection product with any class $c \in \HH_{*}(M;\bb k)$ to be given by the Gromov--Witten invariant 
  \begin{equation*}
    (a * b)_{B} \cdot c = \mathrm{GW}^{M}_{B,3}(a,b,c) \in \bb k
  \end{equation*}
  which counts the number of spheres in $M$, in the class $B$, which meet cycles representing $a$, $b$ and $c$.
  The specific definition of $\mathrm{GW}^{M}_{B,3}$ is not necessary in this note. Only the following facts will be of interest:
  \begin{enumerate}[label=(\roman*)]
  \item As expected, $\deg(a*b) = \deg(a) + \deg(b) - \dim(M)$.
  \item The quantum intersection product turns the ring $\QH_{*}(M;\Lambda)$ into a $\Z$-graded commutative unital algebra.
  \item The unit of this algebra is the fundamental class $[M]$ of the symplectic manifold, seen as an element in $\QH_{2n}(M;\Lambda)$.
%  \item The even degree part $\QH_{\mathrm{ev}}(M;\Lambda)$ is a subalgebra of $\QH_{*}(M;\Lambda)$.
  \end{enumerate}

\paragraph{Quantum valuation}

The quantum homology algebra comes with a natural \emph{valuation} $\nu : \QH_{*}(M;\Lambda) \to \R \cup \{-\infty\}$ defined by
\begin{equation*}
  \label{eq:1}
  \nu\Big( \sum_{\kappa \in \R} a_{\kappa} \otimes q^{d_{\kappa}}t^{\kappa}\Big) = \max \{\kappa \,|\, a_{\kappa} \neq 0 \} \,.
\end{equation*}
Notice that any non-zero class has finite valuation because of the finiteness condition required in the definition of $\Lambda^{\mathrm{univ}}$. By convention, the zero class has valuation $-\infty$.

\paragraph{The monotone case}
    Morally speaking, the variables $q$ and $t$ respectively remember the first Chern number and the symplectic area of classes of spheres in $\pi_{2}(M)$. 
    
    In the (positively) monotone case, namely when
    \begin{equation*}
      \exists \lambda>0 \;\mbox{ such that }\;  \omega|_{\pi_{2}(M)} = \lambda \cdot c_{1}|_{\pi_{2}(M)} \,,
    \end{equation*}
    the morphisms $\omega$ and $c_{1}$ are linearly dependent on $\pi_{2}(M)$. Hence there is no need to carry around two variables: in this case, $\Lambda$ is usually replaced by $\bb k[[s]$ where $s$ is of degree $2N$, i.e. twice the \emph{minimal Chern number} $N$ of $M$ which is the positive generator of $\langle c_{1},\pi_{2}(M) \rangle = N\bb Z$.

    An element of the form $a \otimes s^{j}$ thus corresponds to $a \otimes q^{jN}t^{j\Omega}$ in the above description, with $\Omega$ the positive generator of $\langle \omega, \pi_{2}(M) \rangle$, which satisfies by monotonicity $\Omega = \lambda N$. The definition of the valuation has to be adapted consequently to $\nu\big( \sum_{j \in \Z} a_{j} \otimes s^{j}\big) = \Omega \cdot \max \{j \,|\, a_{j} \neq 0 \}$.

    This is for example the setting of \cite{MR1979584} whose results are used below.

\subsection{Floer homology}
\label{sec:floer-homol-viewp}

In order to prove Arnold's conjecture, Floer developed at the end of the 80's \cite{Floer_Morse_thry_Lagr_intersections,Floer_unregularized_grad_flow_symp_action,Floer_Symp_fixed_pts_holo_spheres,Floer_Witten_cx_inf_dim_Morse_thry} an infinite dimensional Morse--Bott-type homology for the symplectic action functional.
As Gromov--Witten invariants, this construction is another striking consequence of Gromov's celebrated work on pseudo-holomorphic curves \cite{MR0809718}. (And as Gromov--Witten invariants, it has had numerous applications.)

The upshot of Floer's construction, as far as this note is concerned, is that with a generic pair $(H,J)$ formed of a time-{periodic}, non-degenerate Hamiltonian function $H$ and a $\omega$-compatible almost complex structure $J$, one can define a $\Z$-graded complex $(\CF_{*}(M\wrt H),\partial_{(H,J)})$ whose homology $\HF_{*}(M,\omega)=\HH_{*}(\CF(M \wrt H),\partial_{(H,J)})$ satisfies the following properties.

\begin{enumerate}[label=(\roman*)]
\item There exist canonical \emph{continuation isomorphisms} between Floer homologies built from different admissible pairs of Floer data $(H,J)$ and $(H',J')$.
\item The \emph{pair-of-pants product} $\star$
turns Floer homology into a graded algebra, with unit $[M] \in \HF_{2n} (M,\omega)$, the fundamental class of $M$ seen as a Floer homology class.
\item There exist isomorphisms $\PSS : \QH_{*}(M;\Lambda) \to \HF_{*}(M,\omega)$ of graded commutative unital algebras.
\item Floer complexes are \emph{filtered}: any given $\alpha \in \R$ defines a subcomplex $(\CF_{*}^{\alpha}(M\wrt H),\partial_{(H,J)})$. Its homology does not depend on the choice of $J$ and is denoted by $\HF_{*}^{\alpha}(M\wrt H)$.
\end{enumerate}

\subsection{The Seidel morphism}
\label{sec:seidel-morphism}

The \emph{Seidel morphism} was described in \cite{MR1487754} in two quite different but equivalent ways.
On one hand, it can be seen as a morphism 
\begin{equation*}\label{eq:Seidel_geometric_absolute}
  \m S : \pi_{1}\big( \Ham(M,\omega) \big) \longrightarrow \QH_{*}(M;\Lambda)^{\times} \,, \qquad h \longmapsto \m S(h)
\end{equation*}
where $\QH_{*}(M;\Lambda)^{\times}$ denotes the multiplicative group of invertible elements of $\QH_{*}(M;\Lambda)$. The quantum class $S(h)$ is called the \emph{Seidel element} associated to $h$. It is defined by counting pseudo-holomorphic sections of a Hamiltonian fibration over $S^{2}$ with fibre $M$ obtained from a loop $(\phi^{t})_{t\in [0,1]} \in h$ via the clutching construction.

On the other hand, it can be seen as a \emph{representation}
\begin{equation*}\label{eq:Seidel_geometric_absolute}
  \m S : \pi_{1}\big( \Ham(M,\omega) \big) \longrightarrow \mathrm{Aut}\big(\HF_{*}(M,\omega)\big)  \,, \qquad h \longmapsto \m S_{h} \,.
\end{equation*}
The relation between these two viewpoints is straightforward: the automorphism of Floer homology $\m S_{h}$ is nothing but the pair-of-pants multiplication by $\m S(h)$ seen as a Floer homology class via the PSS morphism, i.e.
\begin{equation*}
  \forall b \in \HF_{*}(M,\omega) \,, \quad \m S_{h} (b) = \PSS(\m S(h)) \FP b \,.
\end{equation*}

\subsection{The spectral norm}
\label{sec:seidel-morphism-1}

The spectral norm is a norm defined on the universal cover of Hamiltonian diffeomorphism groups. It is based on the theory of \emph{spectral invariants}, introduced by Viterbo \cite{MR1157321} via generating functions, and adapted to the Floer-theoretic setting by Schwarz \cite{MR1755825} for symplectically aspherical manifolds and Oh \cite{MR2103018} for {more general symplectic manifolds}. 
Since then, they have been defined in a wide range of situations (and also had numerous deep consequences).

\paragraph{Spectral invariants}

Let $a\in \QH_*(M;\Lambda)$ be a non-zero quantum homology class. For any non-degenerate Hamiltonian function $H$ on $M$, the \emph{spectral invariant associated to $a$ with respect to $H$} is the real number
\begin{equation*}
  c(a\wrt H)=\inf\{\alpha\in\R\,|\, \mathrm{PSS}(a)\text{ belongs to the image of }\iota^\alpha\}
\end{equation*}
where $\iota^\alpha:HF_*^\alpha(M\wrt H)\to HF_*(M,\omega)$ is the map induced by the inclusion. These numbers enjoy a list of standard properties. First, they are ``spectral'' in the sense that $c(a\wrt H)$ belongs to the set of critical values of the action functional associated to $H$; this property which explains their name will not be needed here. They are also Hofer continuous, namely
\begin{equation*}
\int_0^1\!\min_{M}(H_t-K_t) \,dt\,\leq  c(a \wrt H)-c(a \wrt K)\leq \int_0^1\!\max_{M}(H_t-K_t) \,dt
\end{equation*}
for any two Hamiltonians $H(t,x)=H_t(x)$, $K(t,x)=K_t(x)$. In particular, $c(a\wrt H)$ continuously depends on $H$ and the map $c(a\wrt \,\cdot\,)$ extends continuously to all (possibly degenerate) Hamiltonians.

Recall that a smooth path of Hamiltonian diffeomorphisms  $(h^t)_{t\in[0,1]}$ with $h^0=\id$ is generated by a unique mean normalized Hamiltonian $H$, i.e. satisfying $\int_M H_t \, \omega^n=0$ for all $t$. Thus we may define spectral invariants of the isotopy $(h^t)$ by setting  $c(a\wrt(h^t))=c(a\wrt H)$. These invariants applied to isotopies have the nice feature that they are invariant by homotopy with fixed end points. Therefore, we obtain a well-defined map on the universal cover of $\Ham(M,\omega)$:
\[c(a\wrt\,\cdot\,):\wt\Ham(M,\omega)\to\R\,.\]

These maps satisfy the so-called \emph{triangle inequality}:
\begin{equation*}
c(a\ast b\wrt \wt h \wt k)\leq c(a\wrt\wt k)+c(b\wrt \wt h)
\end{equation*}
for any classes $a$, $b\in QH_*(M,\Lambda)$ such that $a\ast b\neq 0$, and any $\wt h$, $\wt k\in\wt\Ham(M,\omega)$. 

We will mainly use the spectral invariants associated to the fundamental class. Therefore, it will be convenient to introduce the notation $c_+(H)=c([M]\wrt H)$ and $c_+(\wt h)=c([M]\wrt\wt h)$. Note that the triangle inequality for $c_+$ takes the form
\begin{equation}\label{eq:triangle_ineq}
c_+(\wt h \wt k)\leq c_+(\wt h)+c_+(\wt k) \,.
\end{equation}

\paragraph{Relation between $c$, $\m S$, and $\nu$}

In the language of spectral invariants, the quantum valuation defined in Section~\ref{sec:quantum-homology} behaves as the spectral invariant computed with respect to the zero Hamiltonian / the identity in $\Hamtilde$. By this, we mean that for any non-zero quantum homology class $a \in \QH_{*}(M;\Lambda)$
\begin{equation}\label{eq:Valuation-as-SI-of-0}
   c(a\wrt\wt\id) = \nu(a) \,.
\end{equation}

 Moreover, because Seidel's representation provides automorphisms of \emph{filtered} Floer complexes, the action of any $h \in \pi_{1}\big(\Ham(M,\omega)\big)$ on $\wt\Ham(M,\omega)$ (or in other words the difference between lifts of $\phi \in \Ham(M,\omega)$ to the universal cover) yields, in terms of spectral invariants:
\begin{equation}\label{eq:Seidel-action-on-SI}
  \forall a\in \QH_{*}(M;\Lambda) \,, \quad c(a\wrt\wt k h) = c(\m S(h) * a\wrt \wt k)
\end{equation}
for all $\wt k \in \wt\Ham(M,\omega)$.
In the specific situation where $\wt k=\wt\id$ and $a = [M]$, we get
\begin{equation}\label{eq:cplus-equals-nu-of-Seidel}
    c_{+}(h) = c([M]\wrt h) = c(\m S(h) * [M]\wrt \wt\id) = \nu(\m S(h))
\end{equation}
respectively by definition of $c_{+}$, by (\ref{eq:Seidel-action-on-SI}), and finally by (\ref{eq:Valuation-as-SI-of-0}) and the fact that $[M]$ is the unit of the quantum homology algebra. This equality has been successfully used before, see e.g. Proposition 4.1 in \cite{MR1979584}, Section 6.3 of \cite{ostrover}, and Property (2.4) of spectral numbers in \cite{MR2563682}.

\paragraph{The spectral pseudo-norm}

The spectral pseudo-norm is defined as a group pseudo-norm $\wt\gamma : \wt\Ham(M,\omega) \to \R$ by 
$$\wt\gamma(\wt h) = c_{+}(\wt h) + c_{+}(\wt h^{-1}) = c_{+}(H) + c_{+}(\overline H)$$ 
with $H$ generating $(\phi_{H}^{t})_{t\in [0,1]} \in \wt h$ and $\overline H$ the Hamiltonian function defined by $\overline H_{t}(x) = -H_{1-t}(x)$. Standard computations show that $\overline H$ generates the isotopy $\phi_{H}^{1-t}(\phi_{H}^{1})^{-1} \in \wt h^{-1}$. The triangle inequality for $\wt\gamma$ as well as its non-negativity follow from (\ref{eq:triangle_ineq}) and the fact that $\wt\gamma(\wt\id)=0$.

Notice that (\ref{eq:cplus-equals-nu-of-Seidel}) yields for any $h\in\pi_1(\Ham(M,\omega))$
\begin{equation}
  \label{eq:gamma-equals-Gamma}
  \wt\gamma(h) = c_{+}(h) + c_{+}(h^{-1}) = \nu( \m S(h)) + \nu( \m S(h^{-1})) = \Gamma(h) 
\end{equation}
where $\Gamma$, defined by the last equality, is the central object of Proposition \ref{prop:extension-Seidel} from the introduction. Notice that this implies that
\begin{equation*}
   \wt\gamma(h)\in\langle\omega,\pi_2(M)\rangle, \quad \forall h\in\pi_1(\Ham(M,\omega)).
\end{equation*}

The pseudo-norm $\wt\gamma$ induces a genuine (non-degenerate) norm $\gamma$ on the group $\Ham(M,\omega)$, called the \emph{spectral norm}, by the formula (see \cite{MR2181090})
\begin{equation*}
  \gamma(\phi)=\inf\left\{\wt \gamma(\wt h) \,\big|\, \wt h\in\wt\Ham(M,\omega), \pi(\wt h)=\phi\right\}.
\end{equation*}

\paragraph{The $\gamma$-completion.}

The completion of $\Ham(M,\omega)$ with respect to the spectral norm is denoted by $\Hamhat(M,\omega)$. Recall that by definition it is the quotient of the set of $\gamma$-Cauchy sequences of elements in $\Ham(M,\omega)$ by the equivalence relation
\[(\phi_n)\sim(\psi_n) \quad \text{if and only if }\quad \gamma(\phi_n, \psi_n)\stackrel{n\to\infty}\longrightarrow 0.\]

%%%%%%%%%%%%%%%%%%%%%%%%%%%%%%%%%%%%%%%%%%%%%%
%%%%%%%%%%%%%%%%%%%%%%%%%%%%%%%%%%%%%%%%%%%%%%

\section{Proof of Proposition \ref{prop:extension-Seidel}}\label{sec:covering}

To be able to prove Proposition \ref{prop:extension-Seidel}, we first need to study the metric topologies induced by $\wt\gamma$ and $\gamma$. In this purpose, we prove a few preliminary lemmas and then prove Proposition \ref{prop:extension-Seidel} at the end of the section. Throughout the section, we assume that $(M,\omega)$ is a rational symplectic manifold and we let $\Omega$ denote the unique positive generator of $\langle \omega, \pi_2(M)\rangle$.

To circumvent the unpleasant fact that the pseudo-metric $\wt\gamma$ may be degenerate, we introduce
\[G=\left\{h\in \pi_1\big(\Ham(M,\omega)\big): \wt \gamma(h)=0\right\}.\]
We note that this is a normal subgroup of $\pi_1\big(\Ham(M,\omega)\big)$ and define $\mathcal H=\widetilde\Ham(M,\omega)/G$ the associated cover of $\Ham(M,\omega)$. The pseudo-metric $\wt \gamma$ descends to a non-degenerate metric on $\mathcal H$, still denoted by $\wt\gamma$. Indeed, for any $\wt k\in \widetilde\Ham(M,\omega)$, $h\in G$, by triangle inequality \[\wt\gamma(\wt k h)-\wt\gamma(h)\leq \wt\gamma(\wt k)\leq \wt\gamma(\wt k h)+\wt\gamma(h^{-1})\]
 which yields $\wt\gamma(\wt k h)= \wt\gamma(\wt k)$, since $\wt \gamma(h)=\wt\gamma(h^{-1})=0$.

Denoting by $\pi$ the canonical projection $\mathcal H\to\Ham(M,\omega)$, we still have
\begin{equation}
  \label{eq:gamma-gammatilde}
  \gamma(\phi)=\inf\left\{\wt \gamma(\wt h):\wt h\in\mathcal H, \pi(\wt h)=\phi\right\}.
\end{equation}

\begin{lemma}\label{lemma:isometry-local}
With respect to the spectral metrics $\wt \gamma$ and $\gamma$, the projection $\pi:\mathcal H\to\Ham(M,\omega)$ is a local isometry.
\end{lemma}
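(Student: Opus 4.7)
The plan is to exploit the fact that rationality forces $\wt\gamma$ to take discrete values on $\pi_1\big(\Ham(M,\omega)\big)$, so that the deck transformation group $\pi_1\big(\Ham(M,\omega)\big)/G$ of the cover $\pi:\mathcal H\to\Ham(M,\omega)$ enjoys a positive ``systole'' for $\wt\gamma$. Once this is known, local isometry follows from a standard triangle-inequality argument.

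First I would note that by (\ref{eq:gamma-equals-Gamma}) combined with the rationality of $(M,\omega)$, the restriction of $\wt\gamma$ to $\pi_1\big(\Ham(M,\omega)\big)$ takes values in $\Omega\Z$. It therefore descends to a non-degenerate, $\Omega\Z_{\geq 0}$-valued function on the deck group $\pi_1\big(\Ham(M,\omega)\big)/G$; in particular, every non-trivial deck transformation $h$ satisfies $\wt\gamma(h)\geq\Omega$.

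The main step is then to establish the following quantitative statement: whenever $\wt k\in\mathcal H$ satisfies $\wt\gamma(\wt k)<\Omega/2$, the infimum in (\ref{eq:gamma-gammatilde}) defining $\gamma(\pi(\wt k))$ is attained at the trivial deck transformation, so that $\gamma(\pi(\wt k))=\wt\gamma(\wt k)$. This follows from the triangle inequality applied in the form $\wt\gamma(h)=\wt\gamma(\wt k^{-1}\cdot \wt k h)\leq \wt\gamma(\wt k^{-1})+\wt\gamma(\wt k h)$: remembering that $\wt\gamma(\wt k^{-1})=\wt\gamma(\wt k)$, for any non-trivial deck transformation $h$,
\[\wt\gamma(\wt k h)\geq \wt\gamma(h)-\wt\gamma(\wt k)\geq \Omega-\wt\gamma(\wt k)>\tfrac{\Omega}{2}>\wt\gamma(\wt k).\]

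Finally, given any $\wt h\in\mathcal H$ and any two $\wt h_1,\wt h_2$ in the $\wt\gamma$-ball of radius $\Omega/4$ around $\wt h$, the triangle inequality yields $\wt\gamma(\wt h_1\wt h_2^{-1})<\Omega/2$. Applying the previous step to $\wt k=\wt h_1\wt h_2^{-1}$, together with bi-invariance of $\gamma$ and the fact that $\pi$ is a group homomorphism, gives
\[\gamma\bigl(\pi(\wt h_1),\pi(\wt h_2)\bigr)=\gamma\bigl(\pi(\wt h_1\wt h_2^{-1})\bigr)=\wt\gamma(\wt h_1\wt h_2^{-1})=\wt\gamma(\wt h_1,\wt h_2),\]
which shows that $\pi$ is an isometry on every such ball. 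The delicate point of this argument is the intermediate step: ensuring that for sufficiently small $\wt k$ the infimum in (\ref{eq:gamma-gammatilde}) is actually attained at the identity deck transformation. The crucial input enabling this is rationality, without which $\wt\gamma\bigl(\pi_1(\Ham(M,\omega))\bigr)$ could accumulate at $0$ and the argument would break down.
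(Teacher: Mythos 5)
Your proof of the distance-preservation half is correct and rests on essentially the same ingredient as the paper: by~(\ref{eq:gamma-equals-Gamma}) and rationality, $\wt\gamma$ takes values in $\Omega\Z_{\geq0}$ on the deck group $\pi_1\big(\Ham(M,\omega)\big)/G$, so every non-trivial deck transformation has $\wt\gamma\geq\Omega$. Your ``infimum is attained at the identity'' formulation is in fact slightly tidier than the paper's $\varepsilon$-approximation argument, but the underlying estimate $\wt\gamma(\wt k h)\geq\wt\gamma(h)-\wt\gamma(\wt k)$ is the same.

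Where the argument falls short is that it only shows $\pi$ restricted to $B(\wt h,\Omega/4)$ is an isometric \emph{embedding}. The paper explicitly defines ``local isometry'' to mean that for every $\wt h\in\mathcal H$ there are \emph{open} neighborhoods $U\ni\wt h$ and $V\ni\pi(\wt h)$ with $\pi:U\to V$ a \emph{bijective} isometry, and nowhere do you show that $\pi\big(B(\wt h,\Omega/4)\big)$ is open, i.e.\ that it contains a ball around $\pi(\wt h)$. The missing lifting step is short: given $\psi\in B(\pi(\wt h),r)$ for $r$ small, $\gamma\big(\pi(\wt h)^{-1}\psi\big)<r$, so by~(\ref{eq:gamma-gammatilde}) one may choose a lift $b\in\mathcal H$ of $\pi(\wt h)^{-1}\psi$ with $\wt\gamma(b)<r$; then $\wt\psi:=b\,\wt h$ lies in $B(\wt h,r)$ and $\pi(\wt\psi)=\psi$. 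This surjectivity is not a formality: it is precisely what makes $\pi$ (and then $\wh\pi$) a covering map, which is used in the proofs of Lemmas~\ref{lemma:isometry-local-completion} and~\ref{lem:covering}, so your proof as written does not suffice for the role the lemma plays downstream.
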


By \textit{local isometry}, we mean that for all $\wt h\in\mathcal H$, there exist open neighborhoods $U$ and $V$, respectively of $\wt h$ and $\pi(\wt h)$, such that $\pi$ restricts to a bijective isometry $\pi : U \to V$.

\begin{proof} Let $\wt \phi$, $\wt \psi$ in $\m H$, such that $\wt \gamma(\wt \phi,\wt \psi)<\frac\Omega3$. In order to show that $\pi$ is a local isometry, we first prove that $\gamma\big(\pi(\wt\phi), \pi(\wt\psi)\big)=\wt \gamma(\wt \phi,\wt \psi)$. By (\ref{eq:gamma-gammatilde}), the left hand side is smaller or equal than the right hand side. Let us prove the other inequality.

 Let $\rho:=\pi(\wt\psi^{-1}\wt\phi)=\pi(\wt\psi)^{-1}\circ\pi(\wt\phi)$ so that by definition $\gamma(\rho)=\gamma\big(\pi(\wt\phi), \pi(\wt\psi)\big)$. Let $\eps\in(0, \frac\Omega3)$. By (\ref{eq:gamma-gammatilde}), there exists $h\in \pi_1\big(\Ham(M,\omega)\big)/G$ such that
  \begin{equation}
    \label{eq:isom1}
    \gamma(\rho)\geq\wt\gamma(\wt\psi^{-1}\wt\phi \,h)-\eps.
  \end{equation}
The triangle inequality then yields
\begin{equation*}
  \label{eq:isom2}
  \gamma(\rho)\geq\wt\gamma(h)-\wt\gamma(\wt\psi^{-1}\wt\phi)-\eps>\wt\gamma(h)-\tfrac{\Omega}3-\eps.
\end{equation*}
Since $\gamma(\rho)\leq \wt \gamma(\wt \phi,\wt \psi)<\frac\Omega3$, we deduce $\wt\gamma(h)<2\frac\Omega3+\eps<\Omega$. Now, $\wt\gamma(h)\in\Omega\Z_{\geq0}$, thus $\wt\gamma(h)=0$ and $h$ is the trivial element in $\pi_1(\Ham(M,\omega))/G$.
Using (\ref{eq:isom1}) again, we obtain $\gamma(\rho)\geq \wt\gamma(\wt\psi^{-1}\wt\phi )-\eps$. Since this holds for arbitrarily small $\eps$, this concludes the proof that $\gamma(\rho)\geq \wt\gamma(\wt\phi, \wt\psi)$.

Hence $\pi$ preserves the distances on open balls of radius $\frac \Omega3$ so that, for all $\wt\phi$ and $\phi=\pi(\wt\phi)$, $\pi(B(\tilde\phi, \frac\Omega3))\subset B(\phi, \frac\Omega3)$
and it only remains to prove the opposite inclusion.
Let $\psi\in B(\phi, \frac\Omega3)$, by definition $\gamma(\phi^{-1}\psi)<\frac\Omega3$. By (\ref{eq:gamma-gammatilde}), $\phi^{-1}\psi$ admits a lift $b\in\m H$ with $\wt\gamma(b)<\frac\Omega3$. We set $\wt\psi:=b\,\wt\phi$; then, $\wt\gamma(\wt\phi, \wt\psi)<\frac\Omega3$ and $\pi(\wt\psi)=\psi$. This shows that $\psi$ admits a preimage by $\pi$ in $B(\wt\phi, \frac\Omega3)$, and concludes the proof that $\pi$ restricts to a bijective isometry $\pi : B(\wt\phi, \frac\Omega3) \to B(\phi, \frac\Omega3)$.
\end{proof}

By definition of $\gamma$, the map
$\pi$ is 1-Lipschitz, hence it extends to a continuous map $\wh\pi:\widehat{\mathcal H}\to \Hamhat(M,\omega)$ between  the spectral completions of $\mathcal H$ and $\Ham(M,\omega)$.

\begin{lemma}\label{lemma:isometry-local-completion}
  The map $\wh\pi$ is a local isometry, and it is surjective. As a consequence, it descends to a homeomorphism between  $\widehat{\mathcal H}/\ker(\wh\pi)$ and $\Hamhat(M,\omega)$.
\end{lemma}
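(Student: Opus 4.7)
\medskip

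The plan is to treat the three assertions in sequence, bootstrapping from Lemma \ref{lemma:isometry-local}. Throughout, I write $r = \Omega/3$.

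First I would prove that $\wh\pi$ is a local isometry by extending Lemma \ref{lemma:isometry-local} across completions via a density argument. Fix $\wh h\in\widehat{\m H}$ and pick a Cauchy sequence $(\wt h_n)$ in $\m H$ representing it. For any $\wh h',\wh h''\in \widehat B(\wh h, r/2)$ represented by Cauchy sequences $(\wt k_n),(\wt \ell_n)$, one may assume (up to shifting the index) that $\wt k_n,\wt \ell_n\in B(\wt h_n,r)$ for all $n$; applying Lemma \ref{lemma:isometry-local} to the ball $B(\wt h_n,r)$ yields $\wt\gamma(\wt k_n,\wt\ell_n)=\gamma(\pi(\wt k_n),\pi(\wt\ell_n))$, and passing to the limit gives $\wh\gamma(\wh h',\wh h'')=\wh\gamma(\wh\pi(\wh h'),\wh\pi(\wh h''))$. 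Injectivity of $\wh\pi$ on the ball $\widehat B(\wh h, r/2)$ is then automatic from the isometry.

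Next I would establish surjectivity of $\wh\pi$. Let $\wh\phi\in\Hamhat(M,\omega)$ be represented by a Cauchy sequence $(\phi_n)$ in $\Ham(M,\omega)$. After passing to a subsequence I may assume $\gamma(\phi_n,\phi_{n+1})<r \cdot 2^{-n}$. Choose any lift $\wt h_1\in\m H$ of $\phi_1$, and inductively define $\wt h_{n+1}$ as the unique lift of $\phi_{n+1}$ lying in $B(\wt h_n,r)$ provided by Lemma \ref{lemma:isometry-local}. Applying the local isometry on this ball gives $\wt\gamma(\wt h_n,\wt h_{n+1})=\gamma(\phi_n,\phi_{n+1})<r\cdot 2^{-n}$, so $(\wt h_n)$ is Cauchy and defines $\wh h\in\widehat{\m H}$ with $\wh\pi(\wh h)=\wh\phi$.

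The main obstacle is precisely this lifting step: if one does not reduce to a rapidly-convergent subsequence, it is not obvious that the inductively chosen lifts remain close enough for Lemma \ref{lemma:isometry-local} to apply at every stage, since one needs the \emph{lifted} sequence to be Cauchy, not merely the projected one. The use of the rationality assumption (through the discreteness $\wt\gamma(\pi_1/G)\subset\Omega\Z_{\geq 0}$ that underlies Lemma \ref{lemma:isometry-local}) is essential here: it is what forbids accumulation of distinct lifts inside a small ball.

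Finally I would deduce the quotient statement. The group structure of $\m H$ extends uniquely to $\widehat{\m H}$ (multiplication and inversion are $\wt\gamma$-continuous), and $\wh\pi$ is a continuous group homomorphism, hence $\ker(\wh\pi)$ is a normal subgroup of $\widehat{\m H}$. The induced map $\overline{\wh\pi}:\widehat{\m H}/\ker(\wh\pi)\to\Hamhat(M,\omega)$ is a continuous bijection by the surjectivity just established. Since $\wh\pi$ is a local isometry, it is open; therefore $\overline{\wh\pi}$ is open, and thus a homeomorphism.
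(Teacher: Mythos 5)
Your step 1 shows that $\wh\pi$ preserves distances and is injective on small balls, and your stepwise-lifting argument for global surjectivity in step 2 is correct. The gap is in your final assertion that $\wh\pi$ ``is open'' because it ``is a local isometry''. The paper's definition of local isometry (stated just before Lemma \ref{lemma:isometry-local}) asks for a \emph{bijective} isometry between neighborhoods; what you proved in step 1 is only distance-preservation and injectivity, not that $\wh\pi$ maps a small ball around $\wh h$ \emph{onto} a neighborhood of $\wh\pi(\wh h)$. That local surjectivity is exactly what openness requires, and it does not follow from what you established: a distance-preserving injection between complete metric spaces need not be open (the inclusion of a closed half-plane into $\R^2$ is a counterexample), and your global surjectivity constructs a lift of $\wh\phi$ starting from an \emph{arbitrary} lift of $\phi_1$, with no control on where the resulting element of $\wh{\m H}$ lands relative to a prescribed $\wh h$.

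The paper's proof closes this by proving $\wh\pi\big(B(\wt\phi,\delta)\big)\supseteq B(\phi,\delta)$ directly: for $\psi\in B(\phi,\delta)$, choose a Cauchy sequence $(\chi_n)$ for $\phi^{-1}\psi$ with $\gamma(\chi_n)<\delta$, and let $b_n$ be the unique preimage of $\chi_n$ in the $\delta$-ball of $\m H$ centered at the identity; since $\pi$ is an isometry there, $(b_n)$ is Cauchy with limit $b$ satisfying $\wt\gamma(b)<\delta$, and then $b\,\wt\phi\in B(\wt\phi,\delta)$ lifts $\psi$. This one construction simultaneously yields local surjectivity (hence openness and the full local isometry property) and, by density of $\Ham(M,\omega)$ in $\Hamhat(M,\omega)$, global surjectivity. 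You could also patch your approach by observing that $\wh\pi$ restricted to a closed $\delta$-ball is a distance-preserving map from a complete space, hence has closed image, and that image contains $B(\id,\delta)\cap\Ham(M,\omega)$, which is dense in $B(\id,\delta)$; but as written, the openness of $\wh\pi$ is asserted, not proved.
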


\begin{proof}
  As a continuous extension of $\pi$, the map $\wh\pi$ also preserves distances. Let $\delta>0$ be such that $\pi$ restricts to a bijective isometry between the balls of radius $\delta>0$ centered at the identity.
  We need to check that $\wh\pi \big(B(\wt\phi,\delta)\big) = B(\phi,\delta)$, 
  for any $\phi\in \Hamhat(M,\omega)$ and any lift $\wt\phi\in\wh{\m H}$.

  Let $\psi\in B(\phi,\delta)$. Choose a Cauchy sequence $(\chi_n)$ which represents $\phi^{-1}\psi$. We may assume without loss of generality that $\gamma(\chi_n)<\delta$ for all $n>0$.
  Let $b_n$ be the unique preimage of $\chi_n$ under $\pi$ in the $\delta$-ball centered at the identity. Since $\pi$ is an isometry, $(b_n)$ is a Cauchy sequence whose class $b\in\wh{\m H}$ satisfies $\wt\gamma(b) = \gamma(\phi^{-1}\psi) <\delta$. Thus the element $\wt\psi:=b\,\wt\phi$ of $B(\wt\phi,\delta)$ lifts $\psi$.
  This proves $\wh\pi \big(B(\wt\phi,\delta)\big) \subset B(\phi,\delta)$. Since the opposite inclusion is obvious, $\wh\pi$ is a local isometry.

  The argument above proves that the image of $\wh\pi$ contains a $\delta$-neighborhood of $\Ham(M,\omega)$ which shows that $\wh\pi$ is surjective. As a consequence, $\wh\pi$ descends to a continuous bijection  $\widehat{\mathcal H}/\ker(\wh\pi)\to\Hamhat(M,\omega)$. Being a local homeomorphism,  $\wh\pi$ is open, hence the induced map is a homeomorphism.
\end{proof}

We now explicit the relation between $\pi$ and $\wh\pi$.

\begin{lemma}\label{lem:covering}
  The group homomorphisms $\pi$ and $\wh\pi$ satisfy $\ker(\wh\pi)=\ker(\pi)$, and we have a pull-back diagram of coverings 
\begin{equation}
  \label{eq:diagram1}
  \diagram{\mathcal H \ar[r]^{\wt j}\ar[d]_\pi & \widehat{\mathcal H}\ar[d]^{\wh\pi} \\ \Ham(M,\omega)\ar[r]^j & \Hamhat(M,\omega)}
\end{equation}
\end{lemma}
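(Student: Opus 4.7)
The goal is to prove the kernel equality $\ker(\wh\pi) = \ker(\pi)$, after which the pull-back assertion follows easily. Throughout, I view $\mathcal H$ as a subset of $\wh{\mathcal H}$ via the isometric embedding $\wt j$ (which is injective since $\wt\gamma$ is a genuine metric on $\mathcal H$), and identify $\Ham(M,\omega)$ with its image in $\Hamhat(M,\omega)$ under the similarly injective $j$.

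The inclusion $\ker(\pi)\subset\ker(\wh\pi)$ is immediate: for $\wt h\in\ker(\pi)$, one has $\wh\pi(\wt j(\wt h)) = j(\pi(\wt h)) = j(\id) = \id$. For the reverse inclusion, I would start from $\wh\phi\in\ker(\wh\pi)$ represented by a $\wt\gamma$-Cauchy sequence $(\phi_n)$ in $\mathcal H$. By continuity of $\pi$ and $\wh\pi$, $\gamma(\pi(\phi_n))\to 0$. Fix $\delta = \Omega/3$, so that by Lemma~\ref{lemma:isometry-local} the projection $\pi$ restricts to a bijective isometry $B(\id,\delta)\subset\mathcal H\to B(\id,\delta)\subset\Ham(M,\omega)$. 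For $n$ large, pick the unique preimage $\psi_n$ of $\pi(\phi_n)$ in $B(\id,\delta)\subset\mathcal H$; then $\wt\gamma(\psi_n) = \gamma(\pi(\phi_n))\to 0$, i.e. $\psi_n\to\id$ in $\mathcal H$. Setting $\theta_n := \phi_n\psi_n^{-1}$ gives $\pi(\theta_n) = \pi(\phi_n)\pi(\psi_n)^{-1} = \id$, so $\theta_n\in\ker(\pi)$.

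The key step is to show that $(\theta_n)$ is eventually constant. Using the left-invariant distance $d(\wt h,\wt k) = \wt\gamma(\wt h^{-1}\wt k)$ and the triangle inequality,
\[ d(\theta_n,\theta_m) = \wt\gamma\bigl(\psi_n\,\phi_n^{-1}\phi_m\,\psi_m^{-1}\bigr) \leq \wt\gamma(\psi_n) + d(\phi_n,\phi_m) + \wt\gamma(\psi_m), \]
which tends to $0$ as $n,m\to\infty$. Thus $(\theta_n)$ is Cauchy inside $\ker(\pi)$. But $\ker(\pi)$ is identified with a quotient of $\pi_1(\Ham(M,\omega))$ on which $\wt\gamma$ is non-degenerate and takes values in $\Omega\Z_{\geq 0}$; in particular any two distinct elements of $\ker(\pi)$ are at $d$-distance at least $\Omega$, so $\ker(\pi)$ is discrete. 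Consequently $\theta_n = \theta$ for all sufficiently large $n$, and passing to the limit in $\wh{\mathcal H}$ yields $\wh\phi = \theta\cdot\id = \theta\in\ker(\pi)$. This establishes $\ker(\wh\pi)\subset\ker(\pi)$.

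With the kernel equality in hand, the pull-back statement reduces to showing $\wh\pi^{-1}(\Ham(M,\omega)) = \mathcal H$ inside $\wh{\mathcal H}$. Given $\wh\phi\in\wh{\mathcal H}$ with $\wh\pi(\wh\phi) = j(\phi)$ for some $\phi\in\Ham(M,\omega)$, I pick an arbitrary lift $\wt\phi\in\mathcal H$ of $\phi$; then $\wh\pi(\wh\phi\cdot\wt j(\wt\phi)^{-1}) = \id$, so $\wh\phi\cdot\wt j(\wt\phi)^{-1}\in\ker(\wh\pi) = \ker(\pi)\subset\mathcal H$, whence $\wh\phi\in\mathcal H$. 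The opposite inclusion is obvious by commutativity. I expect the main obstacle to lie in the discreteness argument: it crucially exploits rationality of $(M,\omega)$ (to get values in $\Omega\Z$) together with the fact that quotienting by $G$ made $\wt\gamma$ non-degenerate, so that Cauchy sequences in $\ker(\pi)$ genuinely stabilize; without this two-step setup, one would only recover elements of the metric completion of $\ker(\pi)$ rather than actual elements of $\ker(\pi)$ itself.
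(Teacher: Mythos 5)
Your proof is correct and follows essentially the same approach as the paper: both establish $\ker(\wh\pi)=\ker(\pi)$ by producing, from a Cauchy representative $(\phi_n)$ of an element of $\ker(\wh\pi)$, a $\wt\gamma$-Cauchy sequence in $\ker(\pi)$ (your $\theta_n=\phi_n\psi_n^{-1}$, the paper's $y_n$ obtained directly from the infimum formula~(\ref{eq:gamma-gammatilde})) and concluding via discreteness of $\ker(\pi)$, which as you note comes from $\wt\gamma$ being $\Omega\Z$-valued and non-degenerate after the quotient by $G$. Your explicit three-term triangle inequality making $(\theta_n)$ Cauchy fills in a step the paper leaves implicit, and your direct verification that $\wh\pi^{-1}\bigl(j(\Ham(M,\omega))\bigr)=\wt j(\mathcal H)$ cleanly handles the pull-back property; the one small omission is the ``of coverings'' clause, which the paper settles by invoking the classical fact (together with Lemma~\ref{lemma:isometry-local-completion}) that the quotient of a Hausdorff topological group by a discrete normal subgroup is a covering map.
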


\begin{proof}
    The inclusion $\ker(\pi)\subset\ker(\wh\pi)$ is obvious. On the other hand, let $h\in\ker(\wh\pi)\subset \widehat{\mathcal H}$ and $(h_n)$ be a $\wt\gamma$-Cauchy sequence in $\mathcal H$ which represents $h$. By assumption, $\wh\pi(h)=\id$, which means $\gamma(\pi(h_n))\to 0$. By (\ref{eq:gamma-gammatilde}), there exist elements $y_n\in\ker(\pi)$ such that $\wt\gamma(h_ny_n^{-1})\to 0$. Since $(h_n)$ is a $\wt\gamma$-Cauchy sequence, so is $(y_n)$. Now $\ker(\pi)$ being discrete for the $\tilde\gamma$-topology by Lemma \ref{lemma:isometry-local}, 
  this implies that there exists some element $y\in\ker(\pi)$ such that $y_n=y$ for $n$ large enough. From $\wt\gamma(h_ny_n^{-1})\to 0$, we deduce $h_n\to y$, hence $h=y$. This shows that $h\in\ker(\pi)$, hence concludes the proof that $\ker(\pi)=\ker(\wh\pi)$.

  Recall the classical fact that if $H$ is a discrete normal subgroup of a Hausdorff topological group $G$, then the quotient map $G\to G/H$ is a covering map \cite[Cor. 3.7]{Paulin}. Since $\ker(\wh\pi)=\ker(\pi)$ is discrete, this applies in our case and shows that $\wh\pi:\widehat{\mathcal H}\to \Hamhat(M,\omega)=\widehat{\mathcal H}/\ker(\wh\pi)$ is a covering map.
\end{proof}

It only remains to show how Proposition \ref{prop:extension-Seidel} follows from Lemma \ref{lem:covering}.

\begin{proof}[Proof of Proposition \ref{prop:extension-Seidel}]
  The respective actions of $\pi_1\big(\Ham(M,\omega)\big)$ and $\pi_1\big(\Hamhat(M,\omega)\big)$ on the fibers of the covering maps $\pi$ and $\wh\pi$ over $\id$ are intertwined by Diagram (\ref{eq:diagram1}). In particular, denoting by $1\in\mathcal H$ and $\wh 1\in\wh{\mathcal H}$ the identity elements,  we have $\wt{j}(\alpha\cdot 1)=j_*(\alpha)\cdot \wh{1}$ for any $\alpha\in\pi_1\big(\Ham(M,\omega)\big)$. 

The action of $\pi_1\big(\Ham(M,\omega)\big)$ on $\ker(\pi)$ is induced by the group law on $\pi_1\big(\Ham(M,\omega)\big)$, thus we may write $\wt\gamma(\alpha\cdot 1)=\wt\gamma(\alpha)$. By~(\ref{eq:gamma-equals-Gamma}), we deduce
 \[\Gamma(\alpha)=\wt\gamma(\alpha\cdot 1), \quad \forall \alpha\in\pi_1\big(\Ham(M,\omega)\big).\]
We analogously define  \[\wh\Gamma(\alpha)=\wt\gamma(\alpha\cdot \wh1), \quad \forall \alpha\in\pi_1\big(\Hamhat(M,\omega)\big).\]
Then, we have for any $\alpha\in\pi_1\big(\Ham(M,\omega)\big)$
\[\wh\Gamma\circ j_\ast(\alpha)=\wt\gamma(j_\ast(\alpha)\cdot \wh1)=\wt\gamma(\wt{j}(\alpha\cdot 1))=\wt\gamma(\alpha\cdot 1)=\Gamma(\alpha),\]
where the third equality above follows from the obvious fact that $\wt j$ is an isometry for $\wt \gamma$.
This proves the proposition.
\end{proof}

%%%%%%%%%%%%%%%%%%%%%%%%%%%%%%%%%%%%%%%%%%%%%
%%%%%%%%%%%%%%%%%%%%%%%%%%%%%%%%%%%%%%%%%%%%
\section{Computations}\label{sec:proofs}

\subsection{Proof of Theorem \ref{theo:general-circle-action}}
\label{sec:proof-theor-McDuff-Tolman}

This is a simple remark based on the following deep result from McDuff and Tolman which is part of \cite[Theorem 1.10]{MR2210662}.

\begin{theo*}[McDuff--Tolman]
  Let $\Lambda$ be a Hamiltonian circle action on a compact symplectic manifold $(M,\omega)$, generated by the moment map $K : M \to \R$. Assume $K$ to be normalized and let $K_{\max} = \max_{M}K$. Assume that the fixed point component $F_{\max} = K^{-1}(K_{\max})$ is semifree. Then there are classes $a_{B} \in H_{*}(M)$ {and an integer $m_{\max}$} so that
  \begin{equation}
    \label{eq:SeidelToric-byMcDT}
    \m S(\Lambda) = [F_{\max}] \otimes q^{-m_{\max}}t^{K_{\max}} + \sum a_{B} \otimes q^{-m_{\max}-c_{1}(B)}t^{K_{\max}-\omega(B)}
  \end{equation}
    where the sum runs over all spherical classes $B \in \HH^{S}_{2}(M;\Z)$ with $\omega(B)>0$.
\end{theo*}

Recall that semifreeness means that the group acts semifreely: the stabilizer of each point is trivial or the whole circle in a neighbourhood of $F_{\max}$. The integer $m_{\max}$ will not be used below. Let us simply mention that it is determined by the degree of $ \m S(\Lambda)$ and  that it corresponds to the sum of the weights at a point $x \in F_{\max}$. Note that McDuff and Tolman were also able to specify the structure of the lower order terms, under additional assumptions on $\omega$-compatible almost complex structures.

For our purpose, it is enough to notice that, since $[F_{\max}]$ and the $a_{B}$'s are honest classes in $\HH_{*}(M)$, the valuation of $\m S(\Lambda)$ is $K_{\max}$. Moreover, $\Lambda^{-1}$ is generated by $-K$ whose maximal fixed point component is nothing but $(-K)^{-1}(\max_{M}(-K)) = K^{-1}(\min_{M} K) = F_{\min}$. Requiring both extremal fixed point components to be semifree and applying McDuff--Tolman's Theorem to $K$ and $-K$, yield
\begin{equation*}
  \Gamma(\wt\Lambda) = \nu(\m S(\Lambda)) + \nu(\m S(\Lambda^{-1})) = K_{\max} - K_{\min} 
\end{equation*}
where $K_{\min} = \min_{M} K$.
Since $K$ is not constant, $\Gamma(\wt\Lambda) > 0$.
Under the additional assumption that $(M,\omega)$ is rational, Proposition \ref{prop:extension-Seidel} ensures that $\iota_{*}(\wt\Lambda)$ is not trivial in $\pi_{1}\big(\overline\Ham(M,\omega)\big)$. \hfill $\square$

\subsection{Proof of Corollary \ref{theo:CPn}}
\label{sec:proof-theor-CPn}

Recall from \cite{MR1487754} that {there is} an element $h_{n}$ of order $n+1$ in $\pi_{1}\big(\Ham(\CP^{n},\omega_{\mathrm{FS}})\big)$ {which has non-trivial image by the Seidel homomorphism}. We claim that $\Gamma$ is positive on all classes {with non-trivial associated Seidel element}. Since $(\CP^{n},\omega_{\mathrm{FS}})$ is monotone, it is rational and Proposition \ref{prop:extension-Seidel} ensures that $\iota_{*}(h_{n})$ is of order $n+1$ in $\Hambar(\CP^{n},\omega_{\mathrm{FS}})$.

\medskip
We now proceed and show that 
  \begin{equation}\label{eq:Gamma-equals1-for-CPn}
    \forall\,h \in \pi_{1}\big(\Ham(\CP^{n},\omega_{\mathrm{FS}})\big), \quad h \notin \ker(\m S) \; \Longrightarrow \; \Gamma(h) = \Omega
  \end{equation}
  where $\Omega = \omega_{\mathrm{FS}}([\CP^{1}]) > 0$ is the generator of $\langle \omega_{\mathrm{FS}} , \pi_{2}(\CP^{n}) \rangle$.
  This obviously implies our claim, hence concluding the proof.

  We recall that the quantum homology algebra of $\CP^{n}$ is isomorphic to the algebra $\mathrm k[A] / \{A^{n+1}=s^{-1}\}$ with $\mathrm k = \bb k[[s]$, see our  comment on the monotone case at the end of Section \ref{sec:quantum-homology}. Here, $A$ is the hyperplane class of degree $2n-2$ and $s$ is of degree $2N = 2(n+1)$.
  Notice that for any $m\in\Z$, $A^{m}$ has degree $2n-2m$.

  Moreover, recall from Proposition 4.2 of \cite{MR1979584}, that all Seidel elements are monomials of the form $A^{m}s^{\beta}$. Since they belong to $\QH_{2n}(\CP^{n};\Lambda)$, $\beta = \frac{m}{n+1}$ for degree reasons. Notice that when $m$ is a multiple of $n+1$, we get $A^{m}s^{\beta} = [\CP^{n}]$ as expected (see Proposition 4.3 of \cite{MR1979584}).

  We now assume that $m$ is not a multiple of $n+1$. We need to compute $\nu(A^{m}s^{\beta})+\nu(A^{-m}s^{-\beta})$ so that we may assume that $m \geq 0$. Then
  \begin{equation*}
    m = q \cdot (n+1) + r \quad\mbox{ and }\quad -m = -(q+1) \cdot (n+1) + (n+1-r)
  \end{equation*}
  with $q = \lfloor \frac{m}{n+1} \rfloor$, and $r$ and $n+1-r$ are integers in $(0,n+1)$. Hence,
  \begin{equation*}
    A^{m}s^{\frac{m}{n+1}} = (A^{n+1})^{q} \cdot A^{r}s^{\frac{m}{n+1}} = ([\CP^{n}]s^{-1})^{q} \cdot A^{r}s^{\frac{m}{n+1}} =  A^{r}s^{\frac{m}{n+1}-q} 
  \end{equation*}
  whose valuation is $\Omega \cdot (\frac{m}{n+1}-q) = \Omega \cdot (\frac{m}{n+1}-\lfloor \frac{m}{n+1} \rfloor)$ since $A^{r} \in \HH_{*}(\CP^{n};\Q)$.
  Similarly, by replacing $q$ by $-(q+1)$ and $r$ by $n+1-r$, we get that $A^{-m}s^{\frac{-m}{n+1}}$ has valuation $\Omega \cdot (\lfloor \frac{m}{n+1} \rfloor + 1 - \frac{m}{n+1})$, so that $\nu(A^{m}s^{\beta})+\nu(A^{-m}s^{-\beta}) = \Omega$. 
  
  Hence, $\Gamma(h)=\Omega$ whenever $h$ has a non-trivial associated Seidel element which proves (\ref{eq:Gamma-equals1-for-CPn}) and ends the proof of Corollary \ref{theo:CPn}. \hfill $\square$

\subsection{About Example \ref{example:non-monotone-S2xS2} and the proof of Theorem \ref{theo:Hirzebruch-surfaces}}\label{sec:computations-Hirz-Surfaces}

In this section, we compute the Seidel elements associated to all elements of the fundamental group of the group of Hamiltonian diffeomorphisms of all Hirzebruch surfaces. This completes partial computations from \cite{MR1959582}, \cite{ostrover}, and \cite{MR3681107}.

  Recall that in \cite{MR3798327}, Anjos and the third author computed Seidel elements of many 4-dimensional toric manifolds starting from the fundamental aforementioned result from McDuff and Tolman. In \cite{MR3681107}, these computations were used to prove that Seidel's morphism detects all the generators of the fundamental group of the Hamiltonian diffeomorphism group of all Hirzebruch surfaces, namely all symplectic products of $S^{2}\times S^{2}$ and 1-point blow-ups of $\CP^{2}$.

 We now show that, when we consider $\Gamma$ rather than the Seidel elements themselves, the situation is more subtle:
 \begin{itemize}
 \item $\Gamma$ vanishes on certain essential loops of Hamiltonian diffeomorphisms of all products of spheres except the monotone one, this will justify Example \ref{example:non-monotone-S2xS2} and Fact (F1) of Proposition \ref{prop:facts-on-spectral-norm} from the introduction ;
 \item $\Gamma$ is positive on all essential loops of Hamiltonian diffeomorphisms of the 1-point blow-ups of $\CP^{2}$, this will prove Theorem \ref{theo:Hirzebruch-surfaces} and justify Fact (F2) of Proposition \ref{prop:facts-on-spectral-norm}.
\end{itemize}
The computations done in the latter case will also clarify Facts (F3) to (F6) of Proposition \ref{prop:facts-on-spectral-norm}.
\begin{remark}
Note that all the quantities $\m S$, $\nu$, $\Gamma$, $\wt\gamma$ depend on the symplectic forms, themselves being parameterized by a real number $\mu$ in both cases. However, we omit $\mu$ in the notation for brevity throughout this section. 
\end{remark}

\paragraph{Even Hirzebruch surfaces.}
Recall that $\F_{2k}^{\mu}$ is identified to $M=S^{2}\times S^{2}$ endowed with the symplectic form $\omega_{\mu}$ with area $1$ on the first factor and $\mu$ on the second.

We denote by $u = [S^{2}\times \{\mathrm{pt}\}] \otimes q$ and $v = [\{\mathrm{pt}\}\times S^{2}] \otimes q$ the degree 4 quantum homology classes induced by each component of the product $M=S^{2}\times S^{2}$. The quantum homology algebra of $(M,\omega_{\mu})$ is
\begin{equation*}
  \QH_{*}(M,\omega_{\mu}) \simeq \Lambda^{\mathrm{univ}}[u,v] / \langle u^{2} = t^{-1}, v^{2}= t^{-\mu} \rangle \,.
\end{equation*}

The fundamental group of the group of Hamiltonian diffeomorphisms of $\F_{2k}^{\mu}$ was computed in \cite{MR1775741}.

\smallskip
\noindent $\bullet$ When $\mu = 1$, the manifold $(M,\omega_{1})$ is monotone hence rational, and the fundamental group of $\Ham(M,\omega_{1})$ is generated by two elements of order 2, each generated by one factor of the product. These generators are Hamiltonian circle actions which satisfy the assumption of Theorem~\ref{theo:general-circle-action} so that they induce non-trivial elements in $\Hambar(M,\omega_{1})$. Since they are of order 2, there is nothing more to prove. 

\smallskip
\noindent $\bullet$ When $\mu > 1$ and rational, $(M,\omega_{\mu})$ is rational and $\pi_{1}\big(\Ham(M,\omega_{\mu})\big)$ is generated by the same order-2 circle actions, together with a third Hamiltonian circle action of infinite order $\Lambda$ (denoted $\Lambda^{2}_{e_{1}}$ in \cite{MR3681107}). The Seidel element of the latter and of its inverse are given by
\begin{equation*}
  \m S(\Lambda) = (u+v) \otimes t^{\frac 12 - \epsilon} \quad \mbox{ and } \quad \m S(\Lambda)^{-1} = (u-v) \otimes \frac{t^{\frac 12 + \epsilon}}{1-t^{1-\mu}} 
\end{equation*}
with $\epsilon = \frac 1{6\mu}$. Hence, for any positive integer $\ell$,
\begin{equation*}
  \m S(\Lambda)^{\ell} = \sum_{k=0}^{\ell} \Big( \!\!
  \begin{array}{c}     \ell \\ k   \end{array} \!\!
  \Big) u^{k} v^{\ell-k} t^{(\frac 12-\epsilon)\ell} \,.
\end{equation*}
Notice that, depending on the parity of $k$ and $\ell$ and  up to some power of $t$, $u^{k}v^{\ell-k}$ is $[M]$, $u$, $v$, or $uv$, so that no products of powers of $u$ and $v$ vanish. Moreover, for all $k$ and $\ell$,
$$\nu \big( u^{k}v^{\ell-k}t^{(\frac 12-\epsilon)\ell} \big)= -\Big\lfloor \frac k2 \Big\rfloor - \mu \Big\lfloor \frac{\ell-k}2 \Big\rfloor - \Big(\frac 12-\epsilon\Big)\ell \,.$$
Since $\mu > 1$, we get that $\nu(\m S(\Lambda)^{\ell}) = - \lfloor \frac \ell 2 \rfloor + (\frac 12-\epsilon)\ell$.
% and the valuation of $\m S(\Lambda)^{\ell}$ is given by $\max \{- \lfloor \frac k2 \rfloor - \mu \lfloor \frac{\ell-k}2 \rfloor + (\frac 12-\epsilon)\ell \,|\, 0\leq k\leq \ell \}$

The computation of $\nu(\m S(\Lambda)^{-\ell})$ is similar. Only notice additionally that $\frac 1{1-t^{1-\mu}} = 1 + t^{1-\mu} + t^{2(1-\mu)} + \ldots$ so that, since $1-\mu <0$, $\nu\big((\frac 1{1-t^{1-\mu}})^{\ell}\big) = 0$. Hence, we get $\nu(\m S(\Lambda)^{-\ell}) = - \lfloor \frac \ell 2 \rfloor + (\frac 12+\epsilon)\ell$ which yields
\begin{equation*}
  \Gamma(\Lambda^{\ell}) = \ell - 2 \Big\lfloor \frac\ell 2\Big\rfloor = \left \{
    \begin{array}{l}
      0 \mbox{ if $\ell$ is even},\\
      1 \mbox{ if $\ell$ is odd}.
    \end{array}
    \right.
\end{equation*}
Hence, we see that $\Gamma$ detects only odd powers of $\Lambda$, it follows that $\iota_{*}([\Lambda]) \neq 0$ (but it could be of order 2). \hfill $\square$

\paragraph{Odd Hirzebruch surfaces.} We follow the conventions from \cite[Section 3.2]{MR3681107}: $\bb F_{2k+1}^{\mu}$
  % with $1\leq k\leq \ell$ ($\ell \in \N$) and $\ell < \mu\leq \ell+1$,
  is identified with $\CP^{2}\#\overline{\CP^{2}}$ endowed with the symplectic form $\omega'_{\mu}$ such that
\begin{itemize}[itemsep=.5pt,topsep=4pt]
\item the exceptional divisor $B$ of self-intersection $-1$ has area $\mu >0$,
\item the fiber $F$ has area $1$,
\item the projective line $B+F$ has area $\mu+1$.
\end{itemize}
As a vector space, its quantum homology is generated by:
$u_{0}=[\mathrm{pt}] \otimes q^{2}$, $u=F\otimes q$, $u_{3}=B\otimes q$, and $\bb 1$. It is also convenient to denote by $u_{1} = (B+F)\otimes q = u+u_{3}$. These classes satisfy the following relations\footnote{The last one does not appear as such in \cite{MR3681107} as it was not needed to get the expression of the quantum homology algebra. It is necessary here and can easily be computed with the methods used there. See also \cite[Remark 5.6]{MR2210662} and \cite[Section 3.3]{ostrover} in which it is explicitly computed \emph{under different conventions} though.}:
\begin{equation}\label{eq:relations-generators-QH-odd-Hirz}
  u_{1}\, u_{3} = t^{-1} \,, \quad u^{2} = u_{3}\,t^{-\mu} \,, \quad u^{-1} = u_{0}\,t^{\mu+1} \,.
\end{equation}
As an algebra, the quantum homology is given as the quotient
\begin{equation*}
 \QH_{*} (\bb F^{\mu}_{2k+1}) \simeq \Lambda^{\mathrm{univ}} [u] / \langle u^{4}t^{2\mu} + u^{3}t^{\mu} - t^{-1} \rangle \,.
\end{equation*}
 Last (but not least!), recall from \cite{MR1775741} that the fundamental group of the group of Hamiltonian diffeomorphisms of $\bb F_{2k+1}^{\mu}$ is generated by a single class of infinite order. This class is induced by a circle action $\Lambda$ whose associated Seidel element is $\m S(\Lambda) = u^{-1}t^{-\varepsilon}$ where $\varepsilon = \frac{3\mu^{2}+3\mu+1}{3(1+2\mu)}$. Hence we get, for all integers $p$,
\begin{equation*}
  \Gamma(\Lambda^{p}) = \nu (\m S (\Lambda^{p})) + \nu (\m S (\Lambda^{-p})) = \nu(u^{-p}) + \nu(u^{p}) \,.
\end{equation*}
The following proposition collects the results of the computations of $\Gamma(\Lambda^{p})$ for all $p$ (greater than some $p_{0}$) for all possible values of the parameter $\mu > 0$. 

\begin{prop}\label{prop:formulas-for-Gamma-1ptBUofCP2}
  
\begin{itemize}[leftmargin=12pt]
\item For $0 < \mu \leq \frac 12$,
  \begin{align}
    \label{eq:Gamma_for_mu_leq1half}
    \forall p \geq 7,\quad\Gamma(\Lambda^{p}) = -2 \Big( \Big\lfloor \frac{p-1}{3} \Big\rfloor -1\Big) \cdot \mu + \Big( \Big\lfloor \frac{p-1}{3} \Big\rfloor +1\Big) \,.
  \end{align}
\item For $\mu > \frac 12$, the value of $\Gamma(\Lambda^{p})$ depends on the rest $\ell$ of the Euclidean division of $p$ by $4$, and on the sign of $\mu-1$. Namely, for all $p \geq 12$, $\Gamma(\Lambda^{p})$ is given by the following table:
  \begin{align}
        \begin{array}[t]{c||c|c} 
      \phantom{\Big\lfloor} \ell \phantom{\Big\lfloor} & \frac 12 < \mu \leq 1 & 1 < \mu \\
      \hline
      0 & 2 & 2 \\
      1 & 2 & \mu + 1 \\
      2 & -2\mu+3 & 1 \\
      3 & 2 & \mu+1
    \end{array}
  \end{align}
\end{itemize}
\end{prop}

This proposition shows that, for any $\mu >0$, $\Gamma(\Lambda^{p}) \neq 0$ for $p$ big enough\footnote{Computing the first few terms shows that this fact actually holds for all integers $p$. The formulas we present below, however, do not hold for small integers $p$.} which yields that $\iota_{*}([\Lambda])$ is of infinite order in $\pi_{1}\big(\overline\Ham(\bb F_{2k+1}^{\mu})\big)$ for any positive, \textit{rational} number $\mu$.  Notice that Theorem \ref{theo:Hirzebruch-surfaces} and the facts (F2) to (F6) from Proposition \ref{prop:facts-on-spectral-norm} follow from this proposition.

The graph below illustrates Proposition \ref{prop:formulas-for-Gamma-1ptBUofCP2}.
\begin{center}
\footnotesize
\def\svgwidth{120mm}
% \caption{The values of $\wt\gamma(\Lambda^{p})$}
%% Creator: Inkscape 1.1.2 (0a00cf5339, 2022-02-04), www.inkscape.org
%% PDF/EPS/PS + LaTeX output extension by Johan Engelen, 2010
%% Accompanies image file '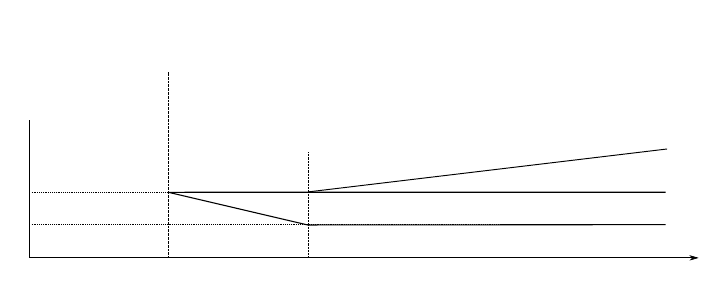' (pdf, eps, ps)
%%
%% To include the image in your LaTeX document, write
%%   \input{<filename>.pdf_tex}
%%  instead of
%%   \includegraphics{<filename>.pdf}
%% To scale the image, write
%%   \def\svgwidth{<desired width>}
%%   \input{<filename>.pdf_tex}
%%  instead of
%%   \includegraphics[width=<desired width>]{<filename>.pdf}
%%
%% Images with a different path to the parent latex file can
%% be accessed with the `import' package (which may need to be
%% installed) using
%%   \usepackage{import}
%% in the preamble, and then including the image with
%%   \import{<path to file>}{<filename>.pdf_tex}
%% Alternatively, one can specify
%%   \graphicspath{{<path to file>/}}
%% 
%% For more information, please see info/svg-inkscape on CTAN:
%%   http://tug.ctan.org/tex-archive/info/svg-inkscape
%%
\begingroup%
  \makeatletter%
  \providecommand\color[2][]{%
    \errmessage{(Inkscape) Color is used for the text in Inkscape, but the package 'color.sty' is not loaded}%
    \renewcommand\color[2][]{}%
  }%
  \providecommand\transparent[1]{%
    \errmessage{(Inkscape) Transparency is used (non-zero) for the text in Inkscape, but the package 'transparent.sty' is not loaded}%
    \renewcommand\transparent[1]{}%
  }%
  \providecommand\rotatebox[2]{#2}%
  \newcommand*\fsize{\dimexpr\f@size pt\relax}%
  \newcommand*\lineheight[1]{\fontsize{\fsize}{#1\fsize}\selectfont}%
  \ifx\svgwidth\undefined%
    \setlength{\unitlength}{340.15748031bp}%
    \ifx\svgscale\undefined%
      \relax%
    \else%
      \setlength{\unitlength}{\unitlength * \real{\svgscale}}%
    \fi%
  \else%
    \setlength{\unitlength}{\svgwidth}%
  \fi%
  \global\let\svgwidth\undefined%
  \global\let\svgscale\undefined%
  \makeatother%
  \begin{picture}(1,0.4)%
    \lineheight{1}%
    \setlength\tabcolsep{0pt}%
    \put(0,0){\includegraphics[width=\unitlength,page=1]{dessin.pdf}}%
    \put(0.22446793,0.00441971){\color[rgb]{0,0,0}\makebox(0,0)[lt]{\lineheight{1.25}\smash{\begin{tabular}[t]{l}$\frac 12$\end{tabular}}}}%
    \put(0.42523514,0.00882943){\color[rgb]{0,0,0}\makebox(0,0)[lt]{\lineheight{1.25}\smash{\begin{tabular}[t]{l}$1$\end{tabular}}}}%
    \put(0.9413072,0.01323916){\color[rgb]{0,0,0}\makebox(0,0)[lt]{\lineheight{1.25}\smash{\begin{tabular}[t]{l}$\mu$\end{tabular}}}}%
    \put(0.00726404,0.07887325){\color[rgb]{0,0,0}\makebox(0,0)[lt]{\lineheight{1.25}\smash{\begin{tabular}[t]{l}$1$\end{tabular}}}}%
    \put(0.09738865,0.05735979){\color[rgb]{0,0,0.99607843}\makebox(0,0)[lt]{\lineheight{1.25}\smash{\begin{tabular}[t]{l}$p=7, 8, 9$\end{tabular}}}}%
    \put(0.11164307,0.09322802){\color[rgb]{0,0,0.99607843}\makebox(0,0)[lt]{\lineheight{1.25}\smash{\begin{tabular}[t]{l}$p=10, 11, 12$\end{tabular}}}}%
    \put(0.1933373,0.34884342){\color[rgb]{0,0,0.99607843}\makebox(0,0)[lt]{\lineheight{1.25}\smash{\begin{tabular}[t]{l}$p=3(n-1)+k$, \\     $k=1, 2, 3$\end{tabular}}}}%
    \put(0.37579006,0.20388489){\color[rgb]{0,0,0.99607843}\makebox(0,0)[lt]{\lineheight{1.25}\smash{\begin{tabular}[t]{l}$p \equiv 2 \!\mod 4$\end{tabular}}}}%
    \put(0.55967102,0.17844919){\color[rgb]{0,0,0.99607843}\makebox(0,0)[lt]{\lineheight{1.25}\smash{\begin{tabular}[t]{l}$p \equiv 1 \!\mod 2$\end{tabular}}}}%
    \put(0.85191023,0.18702461){\color[rgb]{0,0,0}\rotatebox{8.9019156}{\makebox(0,0)[lt]{\lineheight{1.25}\smash{\begin{tabular}[t]{l}$\mu+1$\end{tabular}}}}}%
    \put(0.30171227,0.09157752){\color[rgb]{0,0,0}\rotatebox{-14.331958}{\makebox(0,0)[lt]{\lineheight{1.25}\smash{\begin{tabular}[t]{l}$-2\mu+3$\end{tabular}}}}}%
    \put(0.00726404,0.12515173){\color[rgb]{0,0,0}\makebox(0,0)[lt]{\lineheight{1.25}\smash{\begin{tabular}[t]{l}$2$\end{tabular}}}}%
    \put(0.00726404,0.17015683){\color[rgb]{0,0,0}\makebox(0,0)[lt]{\lineheight{1.25}\smash{\begin{tabular}[t]{l}$3$\end{tabular}}}}%
    \put(0.00726404,0.21697405){\color[rgb]{0,0,0}\makebox(0,0)[lt]{\lineheight{1.25}\smash{\begin{tabular}[t]{l}$4$\end{tabular}}}}%
    \put(0.00726404,0.35744672){\color[rgb]{0,0,0}\makebox(0,0)[lt]{\lineheight{1.25}\smash{\begin{tabular}[t]{l}$n$\end{tabular}}}}%
    \put(0,0){\includegraphics[width=\unitlength,page=2]{dessin.pdf}}%
    \put(0.55655797,0.35543031){\color[rgb]{0,0,0}\makebox(0,0)[lt]{\lineheight{1.25}\smash{\begin{tabular}[t]{l}The values of $\wt\gamma([\Lambda^{p}])$ as functions\\of the symplectic form $\omega'_\mu$.\end{tabular}}}}%
  \end{picture}%
\endgroup%

\end{center}

\begin{remark} 
    It is interesting to compare the first item in Proposition \ref{prop:formulas-for-Gamma-1ptBUofCP2} with the results of Ostrover \cite{ostrover}. Indeed, he establishes that for $\mu > \frac 12$, the quantum homology $QH_{4}(\F^\mu_{2k+1})$ is a field, while it \emph{splits} into field summands for $\mu < \frac 12$. % (so that the quantum homology of $\F^\mu_{2k+1}$ is always \emph{semi-simple}).
    Moreover, he proves that the restriction of $\Gamma$ to any field summand is bounded. Our result shows that $\Gamma$ is however not bounded on the whole quantum homology in the case $\mu<\frac12$.
\end{remark}

\medskip
It remains to prove Proposition \ref{prop:formulas-for-Gamma-1ptBUofCP2} which follows from intermediate computations whose results are collected below, depending on the value of $\mu$.
\begin{align*}
  \begin{array}{c|c|c}
    & \nu(u^{-p}) & \nu(u^{p}) \\
    \hline
  \phantom{\Big(}\!\!\! 0 < \mu \leq \tfrac 12 & \big(p-2 \big\lfloor \frac{p-1}3 \big\rfloor \big) \mu + \big( \big\lfloor \frac{p-1}3 \big\rfloor +1 \big) & -(p-2) \mu \\
  \hline
  \phantom{\Big(}\!\!\! \tfrac 12 < \mu \leq 1 & \big(p-2 \big\lfloor \frac{p+2}4 \big\rfloor \big) \mu + \big( \big\lfloor \frac{p+2}4 \big\rfloor +1 \big) & -\big(p-2 \big\lfloor \frac{p+1}4 \big\rfloor \big) \mu - \big( \big\lfloor \frac{p+1}4 \big\rfloor -1 \big) \\
  \hline
  \phantom{\Big(}\!\!\! 1 < \mu \phantom{\leq3} & \big\lfloor \frac{p+1}{2} \big\rfloor  \mu + \big( \big\lfloor \frac p4 \big\rfloor +1\big) & -\big\lfloor \frac{p}{2} \big\rfloor  \mu - \big\lfloor \frac {p-1}4 \big\rfloor
\end{array}
\end{align*}
\noindent These formulas are proved by induction. Let us first focus on the latter two cases.

\bigskip
\noindent \fbox{We assume $\mu > \tfrac 12$.}

\medskip
We wish to show that for $p$ big enough, $\nu(u^{-p})$ and $\nu(u^{p})$ are of the form specified in the table above, depending on the sign of $\mu-1$.
The values of $\nu(u^{-p})$ will be extracted from the proof of the following lemma.
\begin{lemma}\label{lem:higherorderterm_negativemultipleof4}
  For all integers $q \geq 3$, there are polynomials $P_{i}^{q}$ in $t$, so that
  \begin{equation*}
    u^{-4q} = u_{0} \,P_{0}^{q}(t) + u_{1} \,P_{1}^{q}(t) + u \,P_{2}^{q}(t) + \mathbb 1 P_{3}^{q}(t) .
  \end{equation*}
These polynomials are of the form
\begin{align*}
  P_{0}^{q}(t) &= \alpha_{0}^{q} \,t^{2q\mu+(q+1)} &+\; \mbox{lower order terms} \\
  P_{1}^{q}(t) &= \alpha_{1}^{q} \,t^{(2q-1)\mu+(q+1)} &+\; \mbox{lower order terms} \\
  P_{2}^{q}(t) &= \alpha_{2}^{q} \,t^{(2q-1)\mu+(q+1)} &+\; \mbox{lower order terms} \\
  P_{3}^{q}(t) &= \alpha_{3}^{q} \,t^{2q\mu+q} &+\; \mbox{lower order terms} 
\end{align*}
where all coefficients $\alpha_{i}^{q}$ are positive.
\end{lemma}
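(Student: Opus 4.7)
The plan is to prove the expansion of $u^{-4q}$ by induction on $q \geq 3$, exploiting the multiplicative recursion $u^{-4(q+1)} = u^{-4q} \cdot u^{-4}$ together with a closed form for $u^{-4}$. From the quantum relation $u^{4}t^{2\mu}+u^{3}t^{\mu}=t^{-1}$, multiplying by $u^{-4}t$ immediately gives
\begin{equation*}
u^{-4} = t^{2\mu+1} + u^{-1}t^{\mu+1},
\end{equation*}
and a similar manipulation yields $u^{-2} = u_1 t^{\mu+1}$, once one notices $u^{-2} = u^2 t^{2\mu+1} + u t^{\mu+1}$, $u^2 = u_3 t^\mu$, and $u_1 = u_3 + u$. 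These two identities and $u_0 = u^{-1} t^{-\mu-1}$ constitute the only structural input required from the algebra.

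From them one records the four products of basis elements with $u^{-1}$ back in the basis $\{u_0, u_1, u, \mathbb{1}\}$: $u_0 \cdot u^{-1} = u_1$, $u_1 \cdot u^{-1} = u t^\mu + \mathbb{1}$, $u \cdot u^{-1} = \mathbb{1}$, and $\mathbb{1} \cdot u^{-1} = u_0 t^{\mu+1}$. Plugging the inductive form of $u^{-4q}$ into the expansion of $u^{-4q} \cdot (t^{2\mu+1} + u^{-1}t^{\mu+1})$ and grouping by basis vectors produces the recursion
\begin{align*}
P_0^{q+1} &= t^{2\mu+1} P_0^{q} + t^{2\mu+2} P_3^{q}, \\
P_1^{q+1} &= t^{2\mu+1} P_1^{q} + t^{\mu+1} P_0^{q}, \\
P_2^{q+1} &= t^{2\mu+1}(P_1^{q} + P_2^{q}), \\
P_3^{q+1} &= t^{2\mu+1} P_3^{q} + t^{\mu+1}(P_1^{q} + P_2^{q}).
\end{align*}

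Feeding the predicted leading monomials into these formulas, the exponents combine cleanly: in each of the first three, the two summands carry the same power of $t$, yielding $\alpha_0^{q+1} = \alpha_0^q + \alpha_3^q$, $\alpha_1^{q+1} = \alpha_0^q + \alpha_1^q$, $\alpha_2^{q+1} = \alpha_1^q + \alpha_2^q$, all manifestly positive. For $P_3^{q+1}$, however, the two candidate leading exponents are $2(q+1)\mu + (q+1)$ and $2q\mu + (q+2)$, differing by $2\mu - 1$. This exponent comparison is the main and essentially unique delicate point of the argument: the standing hypothesis $\mu > \tfrac{1}{2}$ ensures that the first contribution dominates, leaving $\alpha_3^{q+1} = \alpha_3^q > 0$ with leading exponent $2(q+1)\mu + (q+1)$, as predicted.

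For the base case, one iterates the same identities directly: starting from $u^{-4} = u_0 t^{2\mu+2} + \mathbb{1} t^{2\mu+1}$ one computes $u^{-8} = 2u_0 t^{4\mu+3} + u_1 t^{3\mu+3} + \mathbb{1} t^{4\mu+2}$ and then $u^{-12}$ in the stated form with $\alpha_0^3 = \alpha_1^3 = 3$ and $\alpha_2^3 = \alpha_3^3 = 1$. One notes along the way that $P_2^2 = 0$, so positivity of all four leading coefficients is only reached from $q = 3$ onwards (and the recursion then preserves it), which explains the starting index of the induction. The hard part is thus precisely the sign of $2\mu - 1$ in the $P_3^{q+1}$ comparison; everything else is formal symbolic manipulation dictated by the recursion.
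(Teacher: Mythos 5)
Your proof is correct and follows essentially the same route as the paper: the same $u^{-1}$-multiplication table (recorded in the paper as~(\ref{eq:multiplication-table-u-1}), derived from the relations~(\ref{eq:relations-generators-QH-odd-Hirz})), the same base case $u^{-12}$, the same recursion for the $P_i^{q}$, and the same crucial observation that $\mu>\tfrac12$ forces $t^{\mu+1}(P_1^q+P_2^q)$ into the lower-order part of $P_3^{q+1}$; your one-step factorization through $u^{-4}=t^{2\mu+1}+u^{-1}t^{\mu+1}$ is a mild streamlining of the paper's four successive multiplications by $u^{-1}$ (the paper keeps the intermediate expansions of $u^{-4q-1},u^{-4q-2},u^{-4q-3}$ because it reuses them afterwards for Table~(\ref{eq:intermediate-computations-negative-powers})). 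One small slip: you wrote $u^2=u_3t^{\mu}$, whereas the relations give $u^2=u_3t^{-\mu}$; with this correction your derivation of $u^{-2}=u_1t^{\mu+1}$ goes through exactly as intended.
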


Its proof consists of tedious but straigthforward computations. First, notice that the relations (\ref{eq:relations-generators-QH-odd-Hirz}), give $u^{-1} = u_{0}\, t^{\mu+1}$ and the following multiplication table
\begin{equation}\label{eq:multiplication-table-u-1}
  \begin{array}{c||c|c|c|c}
   a & u_{0} & u_{1} & u & \mathbb 1 \\ \hline
    \phantom{\Big(}\!\! a \ast u^{-1} & u_{1} & u\,t^{\mu} + \mathbb 1 & \mathbb 1 & u_{0}\,t^{\mu+1} 
  \end{array}
\end{equation}
Next, compute explicitly $u^{-4q}$ for $q=3$ as {base case for a proof by induction on $q$:}
\begin{align*}
  % u^{-4} &= u_{0} \,t^{2\mu+2} + \mathbb 1 t^{2\mu+1} \,,\\
  % u^{-8} &= 2u_{0}\, t^{4\mu+3} + u_{1}\,t^{3\mu+3} + \mathbb 1 t^{4\mu+2} \,,\\
  u^{-12} &= 3u_{0}\, t^{6\mu+4} + 3u_{1}\, t^{5\mu+4} + u\, t^{5\mu+4} + \mathbb 1 (t^{6\mu+3}+t^{4\mu+4}) \,
\end{align*}
which is of the form specified in Lemma \ref{lem:higherorderterm_negativemultipleof4} since $\mu \geq \frac 12$ (otherwise $P_{3}^{3}$ would have valuation $4\mu+4$).

Finally, assume $u^{-4q}$ has the {form} given in the lemma and compute the 4 next powers of $u$. Up to lower order terms, we get (to ease the reading we denote $P^{q}_{i}(t)$ simply by $P_{i}$):
\begin{align}\label{eq:successive-computations-negative-powers}
    u^{-4q-1} &= u_{0} P_{3} t^{\mu+1}  + u_{1} P_{0} + u  P_{1} t^{\mu}  + \mathbb 1 (P_{1} + P_{2}) \\ \nonumber
    u^{-4q-2} &= u_{0} (P_{1} + P_{2}) t^{\mu+1}   + u_{1} P_{3} t^{\mu+1} + u P_{0} t^{\mu}  + \mathbb 1 (P_{0}+P_{1}t^{\mu}) \\ \nonumber
    u^{-4q-3} &= u_{0}(P_{0}+P_{1}t^{\mu})t^{\mu+1}  + u_{1} (P_{1}+P_{2})t^{\mu+1}  + u P_{3}t^{2\mu+1}  + \mathbb 1 (P_{0} + P_{3}t)t^{\mu}
\end{align}
This yields for $u^{-4(q+1)}$:
\begin{align*}
  P_{0}^{q+1}(t) &= P_{3}^{q}(t)\cdot t^{2\mu+2} + P_{0}^{q}(t)\cdot t^{2\mu+1} & + \mbox{ l.o.t.}\\
  P_{1}^{q+1}(t) &= P_{0}^{q}(t)\cdot t^{\mu+1} + P_{1}^{q}(t)\cdot t^{2\mu+1} & + \mbox{ l.o.t.}\\
  P_{2}^{q+1}(t) &= (P_{1}^{q}(t)+P_{2}^{q}(t)) \cdot t^{2\mu+1} & + \mbox{ l.o.t.}\\
  P_{3}^{q+1}(t) &= P_{3}^{q}(t) \cdot t^{2\mu+1} + (P_{1}^{q}(t)+P_{2}^{q}(t)) \cdot t^{\mu+1}  & + \mbox{ l.o.t.}
\end{align*}
whose respective leading order terms are
\begin{align*}
  P_{0}^{q+1} : \quad &(\alpha_{0}^{q}+\alpha_{3}^{q}) \,t^{2(q+1)\mu+(q+1)+1}\\
  P_{1}^{q+1}: \quad &(\alpha_{0}^{q}+\alpha_{1}^{q}) \, t^{(2(q+1)-1)\mu+((q+1)+1)}\\
  P_{2}^{q+1}: \quad & (\alpha_{1}^{q}+\alpha_{2}^{q})\, t^{(2(q+1)-1)\mu+((q+1)+1)}\\
  P_{3}^{q+1}: \quad &\alpha_{3}^{q}\, t^{2(q+1)\mu+(q+1)}
\end{align*}
since all the coefficients $\alpha_{i}^{q}$ are positive and since $\mu > \tfrac 12$ so that $(P_{1}^{q}(t)+P_{2}^{q}(t)) \cdot t^{\mu+1}$ only consists of lower order terms of $P_{3}^{q+1}$. 

\medskip
This proves the lemma. The formula giving $\nu(u^{-p})$ now follows from the lemma together with the valuation of intermediate powers of $u^{-1}$ which easily follow from the computations (\ref{eq:successive-computations-negative-powers}). Indeed, depending on the sign of $\mu - 1$, we get:
\begin{align}\label{eq:intermediate-computations-negative-powers}
  \begin{array}{c|c|c}
    & \tfrac 12 < \mu \leq 1 & 1 < \mu  \phantom{\Big(} \\
    \hline
  \phantom{\Big(} p = 4q\phantom{+1} & \multicolumn{2}{c}{2q\mu + (q+1)} \\
    \hline
  \phantom{\Big(} p = 4q+1 & \multicolumn{2}{c}{(2q+1) \mu + (q+1)}  \\
  \hline
  \phantom{\Big(} p=4q+2 & 2q\mu + (q+2) & (2q+1) \mu + (q+1) \\
  \hline
  \phantom{\Big(} p=4q+3 & (2q+1)\mu + (q+2) & 2(q+1) \mu + (q+1)
\end{array}
\end{align}

\medskip
We now turn to the valuation of positive powers of $u$. Symmetrically to the negative powers, $\nu(u^{p})$ will be extracted from the proof of the following lemma.
\begin{lemma}\label{lem:higherorderterm_positivemultipleof4}
  For all integers $q \geq 2$, there are polynomials $Q_{i}^{q}$ in $t$, so that
  \begin{equation*}
    u^{4q} = u_{0} \,Q_{0}^{q}(t) + u \,Q_{2}^{q}(t) + u_{3} \,Q_{3}^{q}(t) + \mathbb 1 Q_{4}^{q}(t) .
  \end{equation*}
These polynomials are of the form
\begin{align*}
  Q_{0}^{q}(t) &= -\beta_{0}^{q} \,t^{-2q\mu-(q-1)} &+\; \mbox{lower order terms} \\
  Q_{2}^{q}(t) &= -\beta_{2}^{q} \,t^{-(2q+1)\mu-(q-1)} &+\; \mbox{lower order terms} \\
  Q_{3}^{q}(t) &= \beta_{3}^{q} \,t^{-(2q+1)\mu-(q-1)} &+\; \mbox{lower order terms} \\
  Q_{4}^{q}(t) &= \beta_{4}^{q} \,t^{-2q\mu-q} &+\; \mbox{lower order terms} 
\end{align*}
where all coefficients $\beta_{i}^{q}$ are positive.
\end{lemma}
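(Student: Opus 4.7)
The plan is to reproduce the proof of Lemma~\ref{lem:higherorderterm_negativemultipleof4} with obvious modifications, inducting on $q \geq 2$. The first step is to derive the multiplication table for the linear map $a \mapsto a \ast u$ on $\QH_{*}(\bb F_{2k+1}^{\mu})$ in the basis $\{u_{0}, u, u_{3}, \mathbb{1}\}$, analogous to (\ref{eq:multiplication-table-u-1}). Three of the four entries are immediate from (\ref{eq:relations-generators-QH-odd-Hirz}): $u_{0} \ast u = \mathbb{1}\,t^{-\mu-1}$, $u \ast u = u_{3} t^{-\mu}$, and $\mathbb{1} \ast u = u$. The fourth, $u_{3} \ast u = u^{3} t^{\mu}$, requires expressing $u^{3}$ in the basis. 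Combining $u_{0} = u^{-1} t^{-\mu-1}$ with the identity $u^{-1} = u^{3} t^{2\mu+1} + u^{2} t^{\mu+1}$ (itself obtained from the defining relation $u^{4} t^{2\mu} + u^{3} t^{\mu} = t^{-1}$) yields
\[
u^{3} = u_{0} t^{-\mu} - u_{3} t^{-2\mu}, \qquad \text{hence} \qquad u_{3} \ast u = u_{0} - u_{3} t^{-\mu}.
\]

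For the base case $q = 2$, one iterates $\ast u$ eight times starting from $\mathbb{1}$ (or squares the easily computed $u^{4} = -u_{0} t^{-2\mu} + u_{3} t^{-3\mu} + \mathbb{1}\, t^{-2\mu-1}$) and obtains
\[
u^{8} = -2 u_{0} t^{-4\mu-1} - u\, t^{-5\mu-1} + 3 u_{3} t^{-5\mu-1} + \mathbb{1}\, t^{-4\mu-2} + \text{l.o.t.},
\]
fitting the claimed form with positive coefficients $\beta_{0}^{2} = 2$, $\beta_{2}^{2} = 1$, $\beta_{3}^{2} = 3$, $\beta_{4}^{2} = 1$. For the inductive step, assume the shape for $u^{4q}$ and compute $u^{4(q+1)} = u^{4q} \ast u^{4}$ by applying $\ast u$ four successive times. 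Tracking only the leading order terms of each component via the multiplication table above yields explicit linear recursions
\[
\beta_{0}^{q+1} = \beta_{0}^{q} + \beta_{4}^{q}, \quad \beta_{2}^{q+1} = \beta_{2}^{q} + \beta_{3}^{q}, \quad \beta_{3}^{q+1} = \beta_{0}^{q} + \beta_{3}^{q} + \beta_{4}^{q}, \quad \beta_{4}^{q+1} = \beta_{4}^{q},
\]
which manifestly propagate positivity, closing the induction.

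The main obstacle, and the reason this lemma is stated and proved separately from Lemma~\ref{lem:higherorderterm_negativemultipleof4}, is sign bookkeeping. The rule $u_{3} \ast u = u_{0} - u_{3} t^{-\mu}$ couples the $u_{0}$ and $u_{3}$ components with opposite signs, so a priori leading terms of different contributions to a given basis element could cancel. The sign pattern $(-,-,+,+)$ on $(Q_{0}^{q}, Q_{2}^{q}, Q_{3}^{q}, Q_{4}^{q})$ asserted by the lemma is precisely the one for which every leading-term contribution, after being propagated through the multiplication by $u^{4}$, combines as a nonnegative sum of the $\beta_{j}^{q}$'s, so that no cancellation ever occurs. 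A second use of the standing assumption $\mu > \tfrac{1}{2}$ appears when comparing the candidate leading exponents produced by the several contributions to each $Q_{i}^{q+1}$: for instance, $-(2q+2)\mu - q$ beats $-(2q+4)\mu - (q-1)$ exactly because $1 - 2\mu < 0$, and analogous comparisons decide the leading exponents of the three other components. Once the lemma is established, recording the leading order terms of the intermediate powers $u^{4q+1}, u^{4q+2}, u^{4q+3}$ produced along the induction --- in the spirit of (\ref{eq:intermediate-computations-negative-powers}) for negative powers, and splitting according to the sign of $\mu - 1$ --- directly yields the $\nu(u^{p})$ column of the table.
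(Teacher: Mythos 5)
Your proof is correct and follows essentially the same route as the paper: same multiplication table for $\ast\,u$, same base case $u^{8}$ with $\beta^{2}_{0}=2,\beta^{2}_{2}=1,\beta^{2}_{3}=3,\beta^{2}_{4}=1$, same iteration of $\ast\,u$ four times to get the recursions $\beta_{0}^{q+1}=\beta_{0}^{q}+\beta_{4}^{q}$, $\beta_{2}^{q+1}=\beta_{2}^{q}+\beta_{3}^{q}$, $\beta_{3}^{q+1}=\beta_{0}^{q}+\beta_{3}^{q}+\beta_{4}^{q}$, $\beta_{4}^{q+1}=\beta_{4}^{q}$, and the same use of $\mu>\tfrac12$ to decide which exponents dominate. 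The only inessential divergence is that you spell out the derivation of $u_{3}\ast u=u_{0}-u_{3}t^{-\mu}$ (which the paper records in its multiplication table) and observe that the base case could also be obtained by squaring $u^{4}$.
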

Notice that we changed the basis of the cohomology, this yields easier computations and a slight notational discrepancy.
The relevant multiplication table now is
\begin{equation}\label{eq:multiplication-table-u}
  \begin{array}{c||c|c|c|c}
   a & u_{0} & u & u_{3} & \mathbb 1 \\ \hline
    \phantom{\Big(}\!\! a \ast u & \bb 1\,t^{-\mu-1} & u_{3}\,t^{-\mu} & u_{0}-u_{3}\,t^{-\mu} & u 
  \end{array}
\end{equation}
We initialize the induction with $u^{4q}$ for $q=2$:
\begin{align*}
  u^{8} = -u_{0}\, (2t^{-4\mu-1}+t^{-6\mu}) - u\, t^{-5\mu-1} + u_{3}\, (3t^{-5\mu-1}+t^{-7\mu}) + \mathbb 1 (t^{-4\mu-2}+t^{-6\mu-1})
\end{align*}
which is seen to be of the form specified by Lemma \ref{lem:higherorderterm_positivemultipleof4}, again, because $\mu \geq \tfrac 12$.

We now assume $u^{4q}$ has the expression given above and compute the 4 next powers of $u$. To ease the reading we denote $Q^{q}_{i}(t)$ simply by $Q_{i}$ and we introduce the notation $Q_{2,3}$ for $Q_{2}-Q_{3}$. We get, up to lower order terms:
\begin{align}\label{eq:successive-computations-positive-powers}
    u^{4q+1} &= u_{0} Q_{3} + u Q_{4} + u_{3} Q_{2,3} t^{-\mu}  + \mathbb 1 Q_{0} t^{-\mu-1} \\ \nonumber
    u^{4q+2} &= u_{0}  Q_{2,3} t^{-\mu}  + u Q_{0} t^{-\mu-1} + u_{3} (Q_{4}t^{-\mu} -  Q_{2,3}t^{-2\mu})  + \mathbb 1 Q_{3}t^{-\mu-1} \\ \nonumber
  u^{4q+3} &= u_{0}(Q_{4}t^{-\mu} -  Q_{2,3}t^{-2\mu}) + u Q_{3}t^{-\mu-1}  \\ \nonumber
             &\hspace{20mm}+ u_{3}(Q_{0}t^{-2\mu-1} - Q_{4} t^{-2\mu} + Q_{2,3}t^{-3\mu}) + \mathbb 1 Q_{2,3}t^{-2\mu-1}
\end{align}
This yields for $u^{4(q+1)}$ (again, \textit{up to lower order terms}): 
\begin{align*}
  Q_{0}^{q+1}(t) &= Q_{0}^{q}(t)\cdot t^{-2\mu-1} - Q_{4}^{q}(t)\cdot t^{-2\mu}  +(Q_2^q(t)-Q_3^q(t))\cdot t^{-3\mu}\\
  Q_{2}^{q+1}(t) &= (Q_{2}^{q}(t)-Q_{3}^{q}(t))\cdot t^{-2\mu-1}\\
  Q_{3}^{q+1}(t) &= -Q_{0}^{q}(t)\cdot t^{-3\mu-1}+Q_{3}^{q}(t) \cdot t^{-2\mu-1}+Q_{4}^{q}(t)\cdot t^{-3\mu} -(Q_2^q(t)-Q_3^q(t))\cdot t^{-4\mu}\\
  Q_{4}^{q+1}(t) &= Q_{4}^{q}(t) \cdot t^{-2\mu-1} -(Q_2^q(t)-Q_3^q(t))\cdot t^{-3\mu-1}
\end{align*}
whose respective leading order terms are
\begin{align*}
  Q_{0}^{q+1} : \quad &-(\beta_{0}^{q}+\beta_{4}^{q}) \,t^{-2(q+1)\mu-((q+1)-1)}\\
  Q_{2}^{q+1}: \quad &-(\beta_{2}^{q}+\beta_{3}^{q}) \, t^{-(2(q+1)+1)\mu-((q+1)-1)}\\
  Q_{3}^{q+1}: \quad & (\beta_{0}^{q}+\beta_{3}^{q}+\beta_{4}^{q})\, t^{-(2(q+1)+1)\mu-((q+1)-1)}\\
  Q_{4}^{q+1}: \quad &\beta_{4}^{q}\, t^{-2(q+1)\mu-(q+1)}
\end{align*}
since all the coefficients $\beta_{i}^{q}$ are positive and since $\mu > \tfrac 12$.

\medskip
This proves Lemma \ref{lem:higherorderterm_positivemultipleof4}. The expression of $\nu(u^{p})$ then follows from it, together with the valuation of intermediate powers of $u$ which easily follow from the computations (\ref{eq:successive-computations-positive-powers}). Namely, we get
\begin{align}\label{eq:intermediate-computations-positive-powers}
  \begin{array}{c|c|c}
    & \tfrac 12 < \mu \leq 1 & 1 < \mu  \phantom{\Big(} \\
    \hline
  \phantom{\Big(} p = 4q\phantom{+1} & \multicolumn{2}{c}{-2q\mu - (q-1)} \\
  \hline
  \phantom{\Big(} p=4q+1 & -(2q+1) \mu - (q-1)  & -2q\mu-q \\
  \hline
  \phantom{\Big(} p=4q+2 & -2(q+1) \mu - (q-1) & -(2q+1)\mu - q \\
    \hline
  \phantom{\Big(} p = 4q+3 & \multicolumn{2}{c}{-(2q+1) \mu - q}  
\end{array}
\end{align}

Together with the expression of $\nu(u^{-p})$ given by Table (\ref{eq:intermediate-computations-negative-powers}), this concludes the proof of Proposition \ref{prop:formulas-for-Gamma-1ptBUofCP2} in the case $\mu > \tfrac 12$.

\bigskip
\noindent\fbox{We assume $0 < \mu \leq \tfrac 12$.}
\medskip

The sketch of the proof in this case is similar, the main lemmas only need a few adjustments.

\medskip
For negative powers of $u$, the situation is perfectly similar to the previous cases, except that the results depend on the remainder of $p$ mod $3$ rather than $4$.
\begin{lemma}\label{lem:higherorderterm_negativemultipleof3_mulessthan1half}
  For all integers $q \geq 3$, there are polynomials $P_{i}^{q}$ in $t$, so that
  \begin{equation*}
    u^{-3q-1} = u_{0} \,P_{0}^{q}(t) + u_{1} \,P_{1}^{q}(t) + u \,P_{2}^{q}(t) + \mathbb 1 P_{3}^{q}(t) .
  \end{equation*}
These polynomials are of the form
\begin{align*}
  P_{0}^{q}(t) &= t^{(q+1)\mu+(q+1)} &+\; \mbox{lower order terms}\phantom{\,.} \\
  P_{1}^{q}(t) &= \tfrac{(q-1)(q-2)}2 \,t^{(q+2)\mu+q} &+\; \mbox{lower order terms}\phantom{\,.} \\
  P_{2}^{q}(t) &= (q-1) \,t^{(q+2)\mu+q} &+\; \mbox{lower order terms} \phantom{\,.} \\
  P_{3}^{q}(t) &= q \,t^{(q+1)\mu+q} &+\; \mbox{lower order terms}\,.
\end{align*}
\end{lemma}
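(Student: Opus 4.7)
The plan is to prove this lemma by induction on $q \geq 3$, following the same strategy as for Lemma \ref{lem:higherorderterm_negativemultipleof4}. The key qualitative difference is that the hypothesis $\mu \leq \tfrac 12$ reverses the inequality used to single out leading monomials in (\ref{eq:successive-computations-negative-powers}), so that the natural period of the iteration becomes $3$ rather than $4$. This is why the statement is indexed by $u^{-3q-1}$.

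For the base case $q = 3$, I would compute $u^{-10}$ by iterating the multiplication table (\ref{eq:multiplication-table-u-1}) starting from $u^{-1} = u_0\, t^{\mu+1}$, using $\mu \leq \tfrac 12$ to select the correct dominant terms. The result is readily checked to match the claim with $q = 3$, giving $P_0^3$ of valuation $4\mu+4$, $P_1^3$ and $P_2^3$ of valuation $5\mu+3$, and $P_3^3$ of valuation $4\mu+3$, with leading coefficients $1$, $1$, $2$, and $3$ respectively.

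For the inductive step, one application of the multiplication table (\ref{eq:multiplication-table-u-1}) amounts to the substitution
\begin{align*}
(A_0, A_1, A_2, A_3) \longmapsto \bigl(t^{\mu+1} A_3,\ A_0,\ t^\mu A_1,\ A_1 + A_2\bigr).
\end{align*}
Iterating three times, starting from $u^{-3q-1} = u_0 P_0^q + u_1 P_1^q + u P_2^q + \mathbb 1\, P_3^q$, produces the recursion
\begin{align*}
P_0^{q+1} &= t^{\mu+1} P_0^q + t^{2\mu+1} P_1^q, \\
P_1^{q+1} &= t^{\mu+1}\bigl(P_1^q + P_2^q\bigr), \\
P_2^{q+1} &= t^{2\mu+1} P_3^q, \\
P_3^{q+1} &= t^{\mu+1} P_3^q + t^\mu P_0^q.
\end{align*}
Substituting the inductive hypothesis and using $\mu \leq \tfrac 12$ to identify the dominant exponent, the leading terms of $P_i^{q+1}$ match the claimed formulas at index $q+1$. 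The coefficients are obtained by direct addition: for $P_1^{q+1}$ the two contributions have the same exponent and add to $\tfrac{(q-1)(q-2)}{2} + (q-1) = \tfrac{q(q-1)}{2}$; for $P_3^{q+1}$ they add to $q+1$; for $P_2^{q+1}$ only one contribution survives; and for $P_0^{q+1}$ only $t^{\mu+1}P_0^q$ is leading as soon as $\mu < \tfrac 12$ strictly.

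The main subtlety is the boundary case $\mu = \tfrac 12$, where the two candidate leading monomials of $P_0^{q+1}$ coincide in exponent and their (positive) coefficients therefore add. This is harmless for the application, since only the valuation enters Proposition \ref{prop:formulas-for-Gamma-1ptBUofCP2}, but it means that the coefficient $1$ in $P_0^q$ must be read either in the strict regime $\mu < \tfrac 12$, or as the valuation statement $\nu(P_0^q) = (q+1)\mu + (q+1)$ at $\mu = \tfrac 12$. Finally, the valuations of the intermediate powers $u^{-3q-2}$ and $u^{-3q-3}$ are extracted directly from the three iterates computed along the way, yielding the expression for $\nu(u^{-p})$ in the first row of (\ref{eq:intermediate-computations-negative-powers}) and, combined with the parallel analysis of positive powers, completing the proof of Proposition \ref{prop:formulas-for-Gamma-1ptBUofCP2} in the regime $0 < \mu \leq \tfrac 12$.
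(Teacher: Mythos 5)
Your proof is correct and follows essentially the same strategy as the paper's: induction starting from the explicit computation of $u^{-10}$, iterating the multiplication table three times to get the recursion for the $P_i^{q+1}$ in terms of the $P_i^q$, and using $\mu\leq\tfrac 12$ to select the leading monomials. Your observation about the boundary case $\mu=\tfrac 12$ — where the two candidate leading monomials of $P_0^{q+1}$ share the same exponent and their coefficients add — is a genuine refinement that the paper does not address explicitly; as you correctly note, the coefficient $1$ in the statement is literally accurate only for $\mu<\tfrac 12$, though the valuation $(q+1)\mu+(q+1)$ is unaffected, which is all that feeds into Proposition~\ref{prop:formulas-for-Gamma-1ptBUofCP2}. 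You also implicitly correct a small misprint in the paper's displayed leading term for $P_3^{q+1}$ (whose exponent should read $(q+2)\mu+(q+1)$).
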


We use the multiplication table (\ref{eq:multiplication-table-u-1}) to compute
\begin{equation*}
  u^{-10} = u_{0}\, t^{4\mu+4} + u_{1}\, t^{5\mu+3} + 2u \, t^{5\mu+3} + 3\bb 1\,t^{4\mu+3} \,
\end{equation*}
which is indeed of the expected form.

The expressions of the following 3 powers $u^{-3q-2}$, $u^{-3q-3}$, and $u^{-3(q+1)-1}$ are given from $u^{-3q-1}$ by equation (\ref{eq:successive-computations-negative-powers}). From this we conclude that
\begin{align*}
  P_{0}^{q+1} : \quad & t^{(q+2)\mu+(q+2)} & + \mbox{ l.o.t.}\\ % +  \tfrac{(q-1)(q-2)}2 \,t^{(q+4)\mu+q+1}  }\\
  P_{1}^{q+1}: \quad & \big( \tfrac{(q-1)(q-2)}2 +(q-1) \big) \,t^{(q+3)\mu+(q+1)}   & + \mbox{ l.o.t.}\\
  P_{2}^{q+1}: \quad & q \,t^{(q+3)\mu+(q+1)}  & + \mbox{ l.o.t.}\\
  P_{3}^{q+1}: \quad & (q+1)\,t^{(q+1)\mu+(q+1)}  & + \mbox{ l.o.t.}
\end{align*}
since $\mu \leq \frac 12$. Notice that this yields the expected expression of $P_{1}^{q+1}$ since $\tfrac{(q-1)(q-2)}2$ is the sum of all integers between $1$ and $q-2$.

This concludes the proof of the lemma, from which we immediately get that $\nu(u^{-3q-1}) = (q+1)\mu+(q+1)$. The other necessary valuations can be extracted from (\ref{eq:successive-computations-negative-powers}):
\begin{align}\label{eq:neq-powers-mu-leq1half}
  \nu(u^{-3q-2}) = (q+2)\mu+(q+1) \;\mbox{ and }\; \nu(u^{-3q-3}) = (q+3)\mu+(q+1) \,.
\end{align}

\medskip
For positive powers of $u$, the situation is somehow easier since we can directly compute the valuation of each power of $u$. The relevant lemma is the following.
\begin{lemma}\label{lem:higherorderterm_positivemultipleof3_mulessthan1half}
  For all integers $p \geq 5$, there are polynomials $Q_{i}^{p}$ in $t$, so that
  \begin{equation*}
    u^{p} = u_{0} \,Q_{0}^{p}(t) + u \,Q_{2}^{p}(t) + u_{3} \,Q_{3}^{p}(t) + \mathbb 1 Q_{4}^{p}(t) .
  \end{equation*}
These polynomials are of the form
\begin{align*}
  Q_{0}^{p}(t) &= (-1)^{p+1}\,t^{-(p-2)\mu} &+\; \mbox{lower order terms} \phantom{\,.}\\
  Q_{2}^{p}(t) &= (-1)^{p+1}\,t^{-(p-3)\mu-1} &+\; \mbox{lower order terms}\phantom{\,.} \\
  Q_{3}^{p}(t) &= (-1)^{p} \,t^{-(p-1)\mu} &+\; \mbox{lower order terms}\phantom{\,.} \\
  Q_{4}^{p}(t) &= (-1)^{p} \,t^{-(p-2)\mu-1} &+\; \mbox{lower order terms} \,. 
\end{align*}
\end{lemma}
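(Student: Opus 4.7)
The plan is a direct induction on $p$ starting from $p=5$ and incrementing $p$ by one at each step. This is lighter than the block-of-four induction used in the regime $\mu>\frac 12$, because when $\mu\leq\frac 12$ the leading behaviour of the components $Q_i^p$ is periodic of period two in sign and affine in $p$ in the $t$-exponent.

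For the base case $p=5$, one computes $u^2,u^3,u^4,u^5$ step by step from the multiplication table (\ref{eq:multiplication-table-u}), ending with
\[
u^5 \;=\; u_0\,t^{-3\mu} \,+\, u\,t^{-2\mu-1} \,-\, u_3\,t^{-4\mu} \,-\, \mathbb 1\,t^{-3\mu-1},
\]
which matches the claimed form verbatim (each component consists of a single monomial with the prescribed sign and exponent).

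For the inductive step, multiplying the expansion $u^p = u_0Q_0^p + uQ_2^p + u_3Q_3^p + \mathbb 1 Q_4^p$ on the right by $u$ and applying (\ref{eq:multiplication-table-u}) yields the one-step recursion
\[
Q_0^{p+1}=Q_3^p,\quad Q_2^{p+1}=Q_4^p,\quad Q_3^{p+1}=\bigl(Q_2^p-Q_3^p\bigr)\,t^{-\mu},\quad Q_4^{p+1}=Q_0^p\,t^{-\mu-1}.
\]
The first, second, and fourth of these immediately transport the leading term of one component to the leading term of the next, together with the required sign flip $(-1)^p\leftrightarrow(-1)^{p+1}$; the predicted $t$-exponents match after the shifts by $-\mu$ or $-\mu-1$.

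The only delicate point, and the unique place where the hypothesis $\mu\leq\frac 12$ is actually used, is the formula for $Q_3^{p+1}$. Its two candidate leading contributions have $t$-exponents $-(p-2)\mu-1$ (from $Q_2^p\,t^{-\mu}$) and $-p\mu$ (from $-Q_3^p\,t^{-\mu}$), whose difference is $2\mu-1\leq 0$. Hence the second contribution dominates (strictly for $\mu<\frac 12$), producing the claimed leading term $(-1)^{p+1}t^{-p\mu}$ for $Q_3^{p+1}$. At the boundary $\mu=\frac 12$ the two exponents collide, but both coefficients have the common sign $(-1)^{p+1}$ so there is no cancellation and the valuation of $Q_3^{p+1}$ remains $-p\mu$, which is all we need. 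This mild case distinction is the main obstacle; once it is addressed, the identity $\nu(u^p)=-(p-2)\mu$ recorded in the valuation table follows immediately by comparing the four valuations $-(p-2)\mu,\,-(p-3)\mu-1,\,-(p-1)\mu,\,-(p-2)\mu-1$, among which the first is largest whenever $\mu\leq 1$. Combined with Lemma \ref{lem:higherorderterm_negativemultipleof3_mulessthan1half} and (\ref{eq:neq-powers-mu-leq1half}), this closes the regime $\mu\leq\frac 12$ of Proposition \ref{prop:formulas-for-Gamma-1ptBUofCP2}.
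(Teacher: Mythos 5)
Your proof is correct and follows essentially the same route as the paper's: verify the base case $u^5$ explicitly, derive the one-step recursion $Q_0^{p+1}=Q_3^p$, $Q_2^{p+1}=Q_4^p$, $Q_3^{p+1}=(Q_2^p-Q_3^p)t^{-\mu}$, $Q_4^{p+1}=Q_0^p t^{-\mu-1}$ from the multiplication table, and observe that the only exponent comparison requiring $\mu\leq\frac12$ occurs in $Q_3^{p+1}$. Your explicit treatment of the boundary case $\mu=\frac12$ (noting that the colliding leading terms share the sign $(-1)^{p+1}$ and therefore cannot cancel) makes explicit a point the paper leaves implicit, but the argument is otherwise the same.
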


Using the multiplication table (\ref{eq:multiplication-table-u}), we compute
\begin{equation*}
  u^{5} = u_{0}t^{-3\mu} + ut^{-2\mu-1} - u_{3}t^{-4\mu} - \bb 1 t^{-3\mu-1} 
\end{equation*}
and we compute $u^{p+1}$ from $u^{p}$
\begin{equation*}
  u^{p+1} = u_{0}\,Q_{3}^{p}(t) + u\, Q_{4}^{p}(t) + u_{3\,}(Q_{2}^{p}(t)-Q_{3}^{p}(t))\, t^{-\mu} + \bb 1 \, Q_{0}^{p}(t)\, t^{-\mu-1}  + \mbox{ l.o.t.}
\end{equation*}
Notice that $Q_{3}^{p+1}$ is of the expected form because $\mu \leq \frac 12$.

\medskip
This proves the lemma and, together with (\ref{eq:neq-powers-mu-leq1half}), ends the proof of Proposition \ref{prop:formulas-for-Gamma-1ptBUofCP2} in the last remaining case, for $0 < \mu \leq \frac 12$.
 \hfill $\square$

\bibliographystyle{alpha}
\bibliography{C0Seidel}

\end{document}